\documentclass[11pt,reqno]{amsart}

\usepackage{amsmath,amsthm,amssymb,mathrsfs,mathtools,stmaryrd,color}
\setcounter{MaxMatrixCols}{20}
\usepackage{empheq}

\usepackage[top=1in, bottom=1in, left=1in, right=1in]{geometry}

\usepackage{float}
\usepackage{graphicx}

\usepackage[utf8]{inputenc}
\usepackage[T1]{fontenc}

\usepackage{enumitem}
\setlist[enumerate]{itemsep=2pt,parsep=2pt,before={\parskip=2pt}}

\usepackage[colorlinks=true,hyperindex, linkcolor=red!60, pagebackref=false, citecolor=cyan, pdfpagelabels]{hyperref}
\usepackage[capitalize]{cleveref}
\crefname{equation}{}{}
\crefformat{enumi}{#2\textup{\textcolor{black}{(}#1\textcolor{black}{)}}#3}

\usepackage{url}
\usepackage{breakurl}

\usepackage{tikz}
\usetikzlibrary{calc}
\usetikzlibrary{math}

\usepackage{thmtools}
\usepackage{thm-restate}

\newtheorem{theorem}{Theorem}[section]
\newtheorem*{theorem*}{Theorem}
\newtheorem*{definition*}{Definition}
\newtheorem{problem}[theorem]{Problem}

\newtheorem{lemma}[theorem]{Lemma}

\theoremstyle{definition}
\newtheorem{definition}[theorem]{Definition}

\newcommand{\R}{\ensuremath{\mathbb{R}}}
\newcommand{\Z}{\ensuremath{\mathbb{Z}}}
\newcommand{\N}{\ensuremath{\mathbb{N}}}
\newcommand{\eps}{\varepsilon}
\newcommand{\norm}[1]{\left\|#1\right\|}

\newcommand{\abs}[1]{\left|#1\right|}
\newcommand{\ind}[1]{\mathbf{1}_{#1}}

\newcommand{\base}{\boldsymbol{b}}
\newcommand{\digits}[1]{\overline{#1}^{\base}}

\newcommand{\poly}{\mathbb{F}_q[t]}
\newcommand{\F}{\mathcal{F}}
\newcommand{\cst}{c}
\newcommand{\Cst}{C_0}
\newcommand{\Pintro}{\mathcal{P}}
\newcommand{\irred}[1]{\mathcal{P}_{#1}}
\renewcommand{\Pr}[1]{\mathbb{P}\left(#1\right)}
\newcommand{\Prbis}{\mathbb{P}}
\newcommand{\E}{\mathcal{E}}
\newcommand{\EE}[1]{\mathbb{E}\left[#1\right]}
\newcommand{\EEbis}{\mathbb{E}}
\newcommand{\optionalsign}{-}
\newcommand{\fmod}[2]{\left[#1 \ \mathrm{mod}\ #2 \right]}
\newcommand{\pp}{p}
\newcommand{\triples}{\mathcal{T}}
\newcommand{\tupples}{\mathcal{Q}}
\renewcommand{\leq}{\leqslant}
\renewcommand{\geq}{\geqslant}

\parindent 0pt
\parskip 7pt

\makeatletter
\@namedef{subjclassname@2020}{%
  \textup{2020} Mathematics Subject Classification}
\makeatother

\begin{document}

\title[A solution to the Erd\H{o}s-S\'{a}rk\"{o}zy-S\'{o}s problem on asymptotic Sidon bases of order 3]{A solution to the Erd\H{o}s-S\'{a}rk\"{o}zy-S\'{o}s problem \\on asymptotic Sidon bases of order 3}

\author{C\'edric Pilatte}
\address{Mathematical Institute, University of Oxford.}
\email{cedric.pilatte@maths.ox.ac.uk}

\keywords{Sidon sets, additive bases, arithmetic of function fields, probabilistic method}

\subjclass[2020]{Primary 11B13; Secondary 11R58, 11N13, 05D40}

\maketitle

\begin{abstract}
A set $S\subset \N$ is a \emph{Sidon set} if all pairwise sums $s_1+s_2$ (for $s_1, s_2\in S$, $s_1\leq s_2$) are distinct. A set $S\subset \N$ is an \emph{asymptotic basis of order 3} if every sufficiently large integer $n$ can be written as the sum of three elements of $S$. In 1993, Erd\H{o}s, S\'{a}rk\"{o}zy and S\'{o}s asked whether there exists a set $S$ with both properties. We answer this question in the affirmative. Our proof relies on a deep result of Sawin on the $\mathbb{F}_q[t]$-analogue of Montgomery's conjecture for convolutions of the von Mangoldt function.
\end{abstract}

\section{Introduction}

A set $S$ of natural numbers is called a \emph{Sidon set}, or a \emph{Sidon sequence}, if all the sums $s_1+s_2$, for $s_1, s_2\in S$, $s_1\leq s_2$, are distinct. Sidon sequences are named after Simon Sidon, who in 1932 asked Erd\H{o}s about the possible growth rate of such sequences. Write 
\begin{equation*}
	S(x) := \abs{ \{n\leq x: n\in S\} }.
\end{equation*}
Erd\H{o}s observed that the greedy algorithm generates a Sidon sequence $S$ with $S(x) \gg x^{1/3}$. He also conjectured that, for every $\eps>0$, there is a Sidon sequence $S$ with $S(x) \gg x^{1/2-\eps}$ \cite{erdosstory}. In some sense, this would be best possible, as Erd\H{o}s showed that any Sidon sequence $S$ satisfies
\begin{equation*}
	S(x) \ll {x^{1/2}}{(\log x)^{-1/2}}
\end{equation*}
for infinitely many $x$ \cite[Theorem~8]{rothhalberstam}. The lower bound was improved in 1981 by Ajtai, Koml{\'o}s and Szemer{\'e}di \cite{szemeredi}, who proved the existence of a Sidon sequence $S$ with 
\begin{equation*}
	S(x) \gg x^{1/3} \log x^{1/3}
\end{equation*}
using graph-theoretic tools. In a 1998 landmark paper, Ruzsa \cite{ruzsa} used the fact that the primes form a ``multiplicative Sidon set'' to construct a Sidon sequence $S$ with
\begin{equation*}
	S(x) \gg x^{\sqrt{2}-1-o(1)}.
\end{equation*}
This is still the best-known lower bound: improving the exponent $\sqrt{2}-1 \approx 0.4142\ldots$ would be a major achievement.

It is worth mentioning that much more is known about \emph{finite} Sidon sets. In particular, the maximal size of a Sidon subset of $\{1,2,\ldots, n\}$ is $n^{1/2}+O(n^{1/4})$ (see Halberstam and Roth \cite[Chapter~I,~Section~3]{rothhalberstam} for detailed references).

Sidon sequences have become classical objects of interest in arithmetic combinatorics. It is natural to study the properties of the sumset $S+S := \{s_1+s_2:s_1, s_2\in S\}$, or more generally of the $k$th iterated sumset ${kS := \underbrace{S+S+\ldots+S}_{k \text{ times}}}$, when $S$ is a Sidon sequence.

A subset $A\subset \N$ is called an \emph{asymptotic basis of order $k$} if $\N \setminus kA$ is finite. In other words, $A$ is an asymptotic basis of order $k$ if every sufficiently large integer can be written as the sum of $k$ elements of $A$. Many well-known problems can be restated as follows: for a given set $A$, what is the smallest $k$ such that $A$ is an asymptotic basis of order $k$? If $A$ is the set of primes, this is essentially Goldbach's conjecture, while if $A$ is the set of perfect $d$th powers, this is a variant of Waring's problem.

In \cite[Problem~14]{erdos1} and \cite[Problem~8]{erdos2}, Erd\H{o}s, S\'{a}rk\"{o}zy and S\'{o}s asked the following question.
\begin{problem}
\label{problem}
Does there exist a Sidon sequence $S$ which is an asymptotic basis of order 3?
\end{problem}
\vspace{-5pt}
This problem also appears in Erd\H{o}s \cite[p.212]{erdos3} and S\'{a}rk\"{o}zy \cite[Problem~32]{sarkozy}\footnote{There is a typo in \cite[Problem~32]{sarkozy}: ``2'' should be replaced with ``3''.}. In his paper \cite{erdos3}, Erd\H{o}s describes \cref{problem} as ``an old problem of Nathanson and myself''. 

It is easy to see that no Sidon sequence can be an asymptotic basis of order $2$. In fact, Erd\H{o}s, S\'{a}rk\"{o}zy and S\'{o}s \cite{erdos2bis} proved that for any Sidon set $S\subset \{1, 2, \ldots, n\}$, the sumset $S+S$ cannot contain $\geq Cn^{1/2}$ consecutive integers, where $C$ is an absolute constant. Thus, the constant ``$3$'' in \cref{problem} can certainly not be improved.

In this article, we settle \cref{problem} by proving the existence of a Sidon sequence $S$ which is also an asymptotic basis of order $3$. 

A number of partial results have been previously obtained in the direction of \cref{problem}.
\begin{center}
    \begin{tabular}{ c|c } 
    	There is an asymptotic Sidon basis of order & Reference\\
         \hline
         7 & Deshoulliers and Plagne \cite{sidon7} \\ 
         5 & Kiss \cite{sidon5} \\ 
         4 & Kiss, Rozgonyi and S{\'a}ndor \cite{sidon4} \\ 
         $3+\eps$ & Cilleruelo \cite[Theorem~1.3]{sidon3plus}
    \end{tabular}
\end{center}
To make sense of the last row, we recall that a set $S\subset \N$ is an asymptotic basis of order $3+\eps$ if, for every $\eps>0$, every sufficiently large integer $n$ can be written as the sum of four elements of $S$, with one of them $\leq n^{\eps}$. Our solution to \cref{problem} also generalises another result of Cilleruelo \cite[Theorem~1.2]{sidon3plus}, which states that there is an asymptotic basis $S$ of order $3$ such that every integer $n$ can be represented in at most two different ways as a sum of two elements of $S$. Finally, Kiss and S{\'a}ndor \cite{dense} showed the existence of a Sidon sequence $S\subset \N$ whose $3$-fold sumset $S+S+S$ has lower asymptotic density $>0.0064$.\footnote{The constant $0.064$ in their paper \cite{dense} is a typo.} 

The results in \cite{sidon3plus,sidon5,sidon4,dense} quoted above all use some variant of the probabilistic method. The starting point of all these approaches is to consider a random subset $S$ of $\N$ where each $n\in \N$ is chosen to be in~$S$, independently, with some probability $p(n)$ -- a typical choice is $p(n) = n^{-\gamma}$ for some $\gamma>0$. In order for $S$ to resemble a Sidon sequence (i.e.~to be able to use the alteration method), it is necessary to have something roughly like $p(n) \ll n^{-2/3}$. In \cite{sidon3plus,sidon5,sidon4}, the authors choose $p(n) = n^{-\gamma}$ for some $\gamma<2/3$, while for \cite{dense} they choose $p(n) = c n^{-2/3}$ for some optimised constant~$c>0$.
 However, for such choices of $p(n)$, the set $S$ will almost surely \emph{not} be an asymptotic basis of order $3$ (this follows from \cite[Satz~B]{goguel}). Hence, it seems that purely probabilistic approaches are of little use for addressing \cref{problem}.

In order to obtain an asymptotic basis of order $3$, it is more promising to start from the Sidon sequence of Ruzsa mentioned above \cite{ruzsa}, as it is much denser than the Sidon sets obtained by the probabilistic method. 

The underlying idea behind Ruzsa's construction is to consider an infinite set of primes $\Pintro$, and for each $p\in \Pintro$, to define a natural number $n_p$ that ``behaves like a logarithm of $p$'', so that an equality $n_{p_1} + n_{p_2} = n_{p_3} + n_{p_4}$ only holds if $p_1p_2 = p_3p_4$. Since the primes form a multiplicative Sidon set, we get $\{p_1, p_2\} = \{p_3, p_4\}$, so the set $S := \{n_p : p\in \Pintro\}$ has the Sidon property. In his original paper, Ruzsa defined $n_p$ in terms of the binary expansion of the real number $\log_b{p}$ (for some~$b\in \R^{>1}$) by concatenating certain blocks of digits of $\log_b{p}$ to obtain a natural number. 

Cilleruelo \cite{cilleruelo} proposed a neat variant of Ruzsa's construction, where the real logarithm is replaced by a family of discrete logarithms. Fix an increasing sequence $\ell_1, \ell_2, \ldots$ of primes $\ell_i\not\in \Pintro$ and choose for each $i$ a primitive root $\omega_i$ modulo $\ell_i$. Write $\log_{\ell_i, \omega_i}(p)$ for the unique $e\in \{0, 1, \ldots, \ell_i-2\}$ such that $\omega_i^e \equiv p  \pmod {\ell_i}$. Cilleruelo defined $n_p$ by encoding many of these discrete logarithms $\log_{\ell_i, \omega_i}(p)$ into a single natural number, using a suitable base expansion. Now, an equality of the form $n_{p_1} + n_{p_2} = n_{p_3} + n_{p_4}$ implies that
\begin{equation*}
	\log_{\ell_i, \omega_i}(p_1) + \log_{\ell_i, \omega_i}(p_2) = \log_{\ell_i, \omega_i}(p_3) + \log_{\ell_i, \omega_i}(p_4)
\end{equation*}
for many values of $i$, and thus $p_1p_2 \equiv p_3p_4 \pmod {\ell_i}$ for many $i$. With the appropriate quantification, this is only possible if $p_1p_2 = p_3p_4$, which forces $\{p_1, p_2\} = \{p_3, p_4\}$ as before. 

Neither Ruzsa's nor Cilleruelo's construction constitutes an asymptotic basis of order $3$. There is an obvious obstruction, which is intrinsically tied to the way the information about $p$ is encoded in $n_p$. For the readers familiar with Ruzsa's construction, the reason is due to the presence of zeros between the blocks of digits of $\log_b{p}$ within $n_p$. These zeros cannot be removed, for this would destroy the Sidon property of $S$. Cilleruelo's construction suffers from a similar drawback. 

Our point of departure is Cilleruelo's construction. To avoid the issue raised in the previous paragraph, we replace the zeros in the encoding of $n_p$ with random numbers chosen from a carefully designed set $A$ (see \cref{lem:specialset} and \cref{def:ourSequence}). The remaining task is to prove that our modified Cilleruelo construction produces an asymptotic basis of order $3$. Eventually, this reduces to information about the distribution of products of three primes in arithmetic progressions. 

However, the kind of information we would need is not remotely within reach of our current knowledge of prime numbers. We would need something along the lines of Montgomery's conjecture, a far-reaching generalisation of the Generalised Riemann Hypothesis. Montgomery's conjecture~\cite[Eq.\,(17.5), p.419]{iwanieckowalski} states that, for every $\eps>0$,
\begin{equation}
\label{eq:montgomery}
	\psi(x;q,a) := \sum_{\substack{n\leq x\\ n\equiv a\,(\mathrm{mod}~q)}} \Lambda(n) = \frac{x}{\varphi(q)} + O_{\eps}\left(q^{-1/2}x^{1/2+\eps}\right)
\end{equation}
uniformly in $x, q\geq 1$ and $(a, q)=1$. This implies the asymptotic formula $\psi(x;q,a) \sim x/\varphi(q)$ uniformly in the residue class and the modulus, provided that $q\leq x^{1-\eps}$. We would need a similar (suitably normalised) statement, with the triple convolution $\Lambda *\Lambda *\Lambda$ in place of the von Mangoldt function~$\Lambda$.

Cilleruelo \cite[Section~4]{cilleruelo} observed that his construction works equally well over $\poly$. Let $\F$ be a set of irreducible monic polynomials in $\poly$. Fix a sequence $(g_i)$ of irreducible monic polynomials not in $\F$, and for each $g_i$, choose a generator $\omega_i$ of $(\poly/(g_i))^{\times}$. For $f\in \F$, write $\log_{g_i, \omega_i}(f)$ for the unique integer $e\in \{0, 1, \ldots, q^{\deg g_i}-1\}$ such that $\omega_i^{e} \equiv f\pmod{g_i}$. As before, one can define an integer $n_f$ in terms of many such integers $\log_{g_i, \omega_i}(f)$, so that $S := \{n_f : f\in \F\}$ is a Sidon sequence. It is important to remember that $S$ is a Sidon sequence of \emph{integers}, even if $\F$ is a subset of $\poly$.

The final and crucial ingredient to our proof is the remarkable work of Sawin \cite{sawin}, who proved Montgomery-type results for ``factorisation functions'' in $\poly$, a class of arithmetic functions that encompasses (the $\poly$-version of) the von Mangoldt function and its convolutions. For example, \cite[Theorem~1.2]{sawin} implies that, for all $\eps>0$, there is some $q_0(\eps)$ such that, for all prime powers $q\geq q_0(\eps)$, the analogue of Montgomery's conjecture \cref{eq:montgomery} holds over $\poly$ (for squarefree moduli), i.e.
\begin{equation}
\label{eq:montgomerysawin}
	\sum_{\substack{f\in \poly\\ f\text{ monic of degree }n\\ f\equiv a\,(\mathrm{mod}~g)}} \Lambda(f) = \frac{q^n}{\varphi(g)} + O\left((q^{\deg g})^{-1/2}(q^n)^{1/2+\eps}\right)
\end{equation}
uniformly in the modulus $g$ (squarefree monic polynomial) and in the residue class $a$, with ${(a,g)=1}$. Sawin's work relies on deep algebraic geometry methods, including sheaf cohomology, the characteristic cycle, vanishing cycles theory and perverse sheaves. We will use a bound similar to \cref{eq:montgomerysawin} for the triple convolution $\Lambda *\Lambda *\Lambda$ (see \cref{lem:sawin}). With this powerful result, we are able to prove that our modified Cilleruelo sequence is an asymptotic basis of order $3$, thereby solving \cref{problem}.

To conclude this introduction, we mention an application of our work to the study of $B_h[1]$ sequences. A set $S\subset \N$ is said to be $B_h[g]$ set if every positive integer can be written as the sum of $h$ terms from $S$ at most $g$ different ways. For any $h\geq 2$, with the probabilistic method, Kiss and S{\'a}ndor successively showed the existence of a $B_h[1]$ set which is an asymptotic basis of order $2h+1$ \cite{kiss2hplus1}, then $2h$ \cite{kiss2h}. A simple modification of our proof should establish the existence of a $B_h[1]$ set which is an asymptotic basis of order $2h-1$, for $h\geq 2$, thus answering a question of Kiss and S{\'a}ndor~\cite{kiss2hplus1,kiss2h}.

\section*{Acknowledgements}
The author is supported by the Oxford Mathematical Institute and a Saven European Scholarship. I would like to thank my advisors, Ben Green and James Maynard, for their expert guidance and continuing encouragement. I am very grateful to Oliver Riordan for pointing out an error in the appendix of an earlier version of this paper, and for suggesting the method used here to fix it.

\section{Notation}

The sumset of two sets $A,B\subset \Z$ is $A + B := \{a+b : a\in A, \, b\in B\}$. We write $f \ll g$ or $f = O(g)$ if $\abs{f} \leq C g$ for some absolute constant $C>0$. If instead $C$ depends on a parameter $\theta$, we write $f \ll_{\theta} g$ or $f = O_{\theta}(g)$.

For $b\geq 1$ and $a\in \Z$, we write $\fmod{a}{b}$ for the unique integer $n$ satisfying $0\leq n < b$ and ${n\equiv a \pmod b}$.

\begin{definition}[Generalised base]
Let $\base = (b_1, b_2, \ldots)$ be an infinite sequence of integers $\geq 2$. For any $n\geq 1$ and any $x_1, \ldots, x_n \in \Z$, we write 
\begin{equation*}
	\digits{x_n\, \ldots\,  x_1} := x_1 + x_2b_1 + x_3b_1b_2 + \ldots + x_n b_1b_2\cdots b_{n-1}.
\end{equation*}
\end{definition}

Any $m\in \Z^{\geq1}$ can be uniquely represented as $m = \digits{x_n\, \ldots\,  x_1}$ for some $0\leq x_i<b_i$ with $x_n\neq 0$. Recall however that the notation $\digits{x_n\, \ldots\,  x_1}$ is defined for arbitrary integers $x_i$: we will not always require $0\leq x_i<b_i$.


\section{Construction}

The following lemma is a slight strengthening of the statement that, for any sufficiently large $p$, there exists a set $A\subset \Z/p\Z$ such that $A$ and $A+A$ are disjoint, and moreover $A+A+A = \Z/p\Z$. 

\begin{lemma}
\label{lem:specialset}
For every sufficiently large prime $\pp$, there is a set $A \subset \{1, 2, \ldots, \lfloor \pp/2 \rfloor - 1\}$ such that
\begin{enumerate}[label=(\roman*), ref=\roman*]
	\item \label{item:disjoint} the sets $A$ and $A+A+\{0, 1\}$ are disjoint;
	\item \label{item:covers} $A+A+A$ contains $p+2$ consecutive integers.
\end{enumerate}
\end{lemma}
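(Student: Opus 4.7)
The plan is to construct $A$ by random sampling with an alteration step. Set $n := \lfloor p/2 \rfloor - 1$, fix a large constant $C$ to be chosen later, and let $\rho := C(\log p)^2/p^{2/3}$. Sample $A' \subseteq \{1,\dots,n\}$ by including each element independently with probability $\rho$, and set
\[
B := A' \cap (A' + A' + \{0,1\}), \qquad A := A' \setminus B.
\]
Property~\cref{item:disjoint} holds automatically: since $A \subseteq A'$, we have $A \cap (A + A + \{0,1\}) \subseteq A' \cap (A' + A' + \{0,1\}) = B$, while $A \cap B = \emptyset$ by construction.

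For property~\cref{item:covers}, I would fix a target window $I := [N, N + p + 1]$ with $N$ of order $p/4$, so that $I \subseteq [3, 3n]$ and the representation count $r(k) := \#\{(x,y,z) \in [1,n]^3 : x+y+z = k\}$ is at least $c_0 p^2$ for every $k \in I$, for an absolute constant $c_0 > 0$. Writing $R_k := \#\{(x,y,z) \in A^3 : x+y+z = k\}$, it suffices to show that the event $\{R_k \geq 1 \text{ for all } k \in I\}$ has positive probability. A conditional-probability argument gives
\[
\EE{R_k} \geq (1 - O(n\rho^2)) \cdot \rho^3 r(k) \gtrsim C^3(\log p)^6,
\]
with the error $n\rho^2 = O((\log p)^4/p^{1/3}) = o(1)$ accounting for the probability that a prescribed element of $A'$ lies in $B$.

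To pass from this first-moment bound to $\Pr{R_k = 0} = o(1/p)$ (whence a union bound over $k \in I$ succeeds), I would decompose $R_k = R_k^{A'} - R_k^{\mathrm{bad}}$, where $R_k^{A'}$ counts ordered triples in $(A')^3$ summing to $k$ and $R_k^{\mathrm{bad}}$ is the subcount of those involving an element of $B$. Janson's inequality yields $\Pr{R_k^{A'} \leq \mu/2} \leq \exp(-\Omega((\log p)^6))$, where $\mu := \EE{R_k^{A'}}$. The quantity $R_k^{\mathrm{bad}}$ is majorised by the number of ordered quintuples $(a_1, a_2, a_3, d, e) \in (A')^5$ with $a_1 + a_2 + a_3 = k$ and $d + e \in \{a_j, a_j - 1\}$ for some $j \in \{1,2,3\}$; this count has expectation $O(\rho^5 n^3) = O((\log p)^{10}/p^{1/3})$, and a tail estimate via Kim--Vu polynomial concentration, applied to this degree-$5$ polynomial in the indicators $\ind{i \in A'}$, delivers the required bound.

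The main technical obstacle is controlling $R_k^{\mathrm{bad}}$. Since the event $\{a_i \in B\}$ is not monotone in the random set $A'$, Janson-type inequalities do not apply directly to $R_k$ itself; one must route through the degree-$5$ witness polynomial and invoke Kim--Vu (or an analogous tool) to extract a tail estimate strong enough to overcome the factor of $p$ in the union bound over the target window $I$.
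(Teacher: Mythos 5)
Your proposal is correct in outline and shares the paper's skeleton: sample a random subset of $\{1,\ldots,\lfloor p/2\rfloor-1\}$ of density about $p^{-2/3}$ times logarithms, delete $B:=A'\cap(A'+A'+\{0,1\})$ so that property~\cref{item:disjoint} holds by construction, lower-bound the representation count over a window of $p+2$ integers by Janson, and show the deletion spoils only a negligible number of representations. Where you genuinely diverge is the last, delicate step (the spoiled count is non-monotone in the random set, which is exactly the issue the appendix is built around). The paper resolves it elementarily: either $\ind{R}*\ind{R}$ or $\ind{R}*\ind{-R}$ takes a large value somewhere (probability $O(1/\log p)$ by first moments), or among the degree-$5$ witness quintuples for some $n$ one can extract four with pairwise disjoint coordinates, whose expected number over the whole window is again $O(1/\log p)$; this caps the spoiled representations at $O(\log p)$ per $n$ against $\gg(\log p)^2$ good ones. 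You instead apply Kim--Vu to the witness polynomial; the parameters do work out (degenerate quintuples with repeated coordinates force $E_4,E_5=\Theta(1)$, so $\sqrt{EE'}=O(1)$, giving a per-$k$ tail of $O((\log p)^5)$ with failure probability $O(p^{-6})$, well below your Janson mean of order $(\log p)^6$), so your route is viable. The paper's argument buys self-containedness (only Janson plus union bounds, and it only needs failure probability $o(1)$ for the whole window rather than $o(1/p)$ per $k$); yours buys a shorter, more mechanical proof at the cost of heavier machinery and of carefully verifying the $E_j$'s, the repeated-coordinate degeneracies, and the multiplicity/distinctness bookkeeping in both Janson and Kim--Vu.
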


It turns out that we will only need to use the existence of a single pair $(p, A)$ with these properties. \Cref{lem:specialset} can be shown by the alteration method in probabilistic combinatorics. We provide a detailed proof in \cref{sec:appendix}.

For the remainder of this paper, we fix a pair $(\pp, A)$ satisfying the conclusion of \cref{lem:specialset}. In particular, $\pp$ should be thought of as an absolute constant. Let $\cst = 0.35$, the point is that
\begin{equation*}
\frac13 < \cst < \frac{3-\sqrt{5}}{2}.
\end{equation*}
Let $\Cst > 100\pp$ be a large absolute constant that will be chosen later.

\begin{definition}[$\irred{d}$, $g_i$, $\omega_i$, $\F_k$, $\F$]
\label{def:poly}
Let $q\geq \Cst$ be a prime (or a prime power). Let $\irred{d}$ be the set of irreducible monic polynomials $f\in \poly$ of degree $d$. Recall the standard formula of Gauss
\begin{equation}
\label{eq:gauss}
	|\irred{d}| = \frac{1}{d}\sum_{e \mid d} \mu \left(\frac{d}{e}\right) q^{e} = \frac{q^d}{d} + O(q^{d/2}).
\end{equation}

For $i\geq 1$, let $g_i$ be an arbitrary element of $\irred{2i-1}$, and fix an arbitrary generator $\omega_i$ of $(\poly/(g_i))^{\times}$. 

For $k\geq \Cst$, let 
\begin{equation*}
	\F_k := \bigcup_{\substack{\cst k^2\leq 2i < \cst (k+1)^2}} \irred{2i}.
\end{equation*}
Let $\F := \bigcup_{k\geq \Cst} \F_k$.
\end{definition}

We will work in the generalised base $\base = (b_1, b_2, \ldots)$ where
\begin{equation}
\label{eq:defbase}
	b_i = \begin{cases}
		q^{i}-1 & i\text{ odd}\\
		\pp & i \text{ even.}
	\end{cases}
\end{equation}

\begin{definition}[$e_i$, $r_i$, $s$, $n_f$, $S$]
\label{def:ourSequence}
Let $k\geq \Cst$ and let $f\in \F_k$. We associate to $f$ a positive integer $n_f$ as follows. 
\begin{itemize}
	\item For $1\leq i\leq k$, let $e_i(f)$ be the unique integer such that $0\leq e_i(f)< b_{2i-1} = q^{2 i-1}-1$ and 
\begin{equation*}
	\omega_i^{e_i(f)} \equiv f \pmod {g_i}.
\end{equation*}
	\item Let $r_1(f), \ldots, r_k(f)$ be a sequence of i.i.d.~random variables, each uniformly distributed on~$A$ (in particular, $1\leq r_i(f) \leq \pp/2-1$). 
	\item Let $s(f)$ be an integer chosen uniformly at random in the set $\{1, 2, \ldots, q^{3k}\}$.
\end{itemize}
It is understood that the family of all random variables $r_i(f)$ and $s(f)$ (over all choices of $f$ and $i$) is independent.

We define $n_f$ to be the integer 
\begin{equation*}
	n_f := \digits{s\, r_k\, e_k\, \ldots\, r_2\, e_2\, r_1\, e_1}
\end{equation*}
omitting the dependence of each digit on $f$ for conciseness. To be explicit, 
\begin{equation*}
	n_f = e_1(f) + r_1(f) b_1 + e_2(f) b_1b_2 + r_2(f) b_1 b_2 b_3 + \ldots + r_k(f) b_1b_2\cdots b_{2k-1} + s(f) b_1b_2\cdots b_{2k}, 
\end{equation*}
with $b_i$ as in \cref{eq:defbase}.\footnote{Note that $s(f)$ may exceed $b_{2k+1}$, whereas for $1\leq i\leq 2k$ the $i$th digit ($e_{(i+1)/2}$ or $r_{i/2}$) is always between $0$ and $b_i-1$.}

Finally, we define $S = \{n_f : f\in \F\}$.
\end{definition}

We will show that, with probability $1$, the elements of $S$ form a Sidon sequence and an asymptotic basis of order $3$.
\medbreak

\begin{lemma}
\label{lem:preliminary}
\begin{enumerate}[label=(\roman*), ref=\roman*]
	\item \label{item:sizenf} Let $f\in \F_k$. Then $n_f = q^{k^2+O(k)}$.
	\item \label{item:uniqueness} Let $f_1, f_2\in \F$ be such that $n_{f_1} = n_{f_2}$. Then $f_1 = f_2$.
\end{enumerate}
\end{lemma}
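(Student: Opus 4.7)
For part (i), the plan is to directly bound $n_f$ using the defining formula from Definition~\ref{def:ourSequence}. The lower bound $n_f \geq b_1 \cdots b_{2k}$ follows from $s(f) \geq 1$, while the upper bound $n_f < (q^{3k}+1) b_1 \cdots b_{2k}$ follows from $s(f) \leq q^{3k}$ combined with the fact that the sum of lower-order contributions is strictly less than $b_1 \cdots b_{2k}$. Since $b_1 \cdots b_{2k} = \pp^k \prod_{i=1}^k (q^{2i-1}-1) = q^{k^2 + O(k)}$ (using $\pp^k = q^{O(k)}$, as $\pp$ is an absolute constant while $q \geq \Cst > 100\pp$) and $q^{3k} = q^{O(k)}$, the conclusion follows.

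For part (ii), suppose $n_{f_1} = n_{f_2}$ with $f_j \in \F_{k_j}$ and, by symmetry, $k_1 \leq k_2$. I would first estimate the number $N_f$ of base-$\base$ digits of $n_f$ for $f \in \F_k$. The bound $n_f \geq b_1\cdots b_{2k}$ gives $N_f \geq 2k+1$, and the bound $n_f < (q^{3k}+1)b_1\cdots b_{2k}$ together with the easy estimate $b_{2k+1} b_{2k+2} b_{2k+3} \geq \pp\, q^{4k+4}/4 > 2 q^{3k}$ gives $N_f \leq 2k+3$. Consequently $N_{f_1} = N_{f_2}$ forces the discrete intervals $\{2k_j+1, 2k_j+2, 2k_j+3\}$ to intersect, restricting $k_2 - k_1$ to $\{0, 1\}$.

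Next, I would invoke the uniqueness of the standard base-$\base$ expansion. In both remaining cases, all digits $e_i(f_j)$ and $r_i(f_j)$ lie in their valid ranges (since $r_i < \pp = b_{2i}$ and $e_i < b_{2i-1}$), so they appear as the first $2k_j$ digits of the standard expansion of $n_{f_j}$. If $k_1 = k_2 = k$, matching these digits gives $e_i(f_1) = e_i(f_2)$ for $i = 1, \ldots, k$, hence $g_1 \cdots g_k \mid f_1 - f_2$; since $\deg(g_1 \cdots g_k) = k^2 > \cst(k+1)^2 > \deg(f_1-f_2)$ (valid for $k \geq \Cst$ as $\cst<1$), we conclude $f_1 = f_2$. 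If instead $k_2 = k_1 + 1$, matching the first $2k_1$ digits gives $g_1 \cdots g_{k_1} \mid f_1 - f_2$; together with $\deg(f_1-f_2) < \cst(k_1+2)^2 < k_1^2$ (again by $\cst<1$ and $k_1 \geq \Cst$), this forces $f_1 = f_2$, which contradicts $\deg f_1 < \cst(k_1+1)^2 \leq \deg f_2$, ruling out this case. The main obstacle is the length estimate: a priori the size range of $n_f$ spans a factor of $q^{3k}$ coming from $s(f)$, so naively several $\F_k$'s could produce overlapping ranges; however, the rapid growth of the base along odd positions keeps $N_f$ tightly confined to three consecutive values, allowing at most a unit difference between $k_1$ and $k_2$.
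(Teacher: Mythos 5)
Your proof is correct and follows essentially the same route as the paper: bound $n_f$ between $b_1\cdots b_{2k}$ and $(q^{3k}+1)b_1\cdots b_{2k}$ to get $n_f = q^{k^2+O(k)}$, match the low base-$\base$ digits to deduce $f_1 \equiv f_2 \pmod{g_1\cdots g_k}$, and finish by comparing $\deg(f_1-f_2) < \cst(k+1)^2$ with $\deg(g_1\cdots g_k)=k^2$ using $\cst<1$. The only minor difference is that you pin down $|k_1-k_2|\leq 1$ via a digit-count argument, whereas the paper just uses part (i) to get $k_1 = k_2 + O(1)$ and reduces modulo $b_1\cdots b_{2\min(k_1,k_2)}$; both suffice.
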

\begin{proof}
\begin{enumerate}[label=(\roman*), ref=\roman*]
	\item On the one hand, 
	\begin{equation*}
		n_f = \digits{s\, r_k\, e_k\, \ldots\, r_2\, e_2\, r_1\, e_1} \geq \digits{1\, \underbrace{0\, 0\,  \ldots\, 0\, 0}_{2k\text{ zeros}}} = \prod_{i=1}^k \pp(q^{2i-1}-1) > \prod_{i=1}^k q^{2i-1} = q^{k^2}.
	\end{equation*}
	On the other hand,
	\begin{equation*}
		n_f = \digits{s\, r_k\, e_k\, \ldots\, r_2\, e_2\, r_1\, e_1} \leq \digits{(q^{3k}+1)\, \underbrace{0\, 0\,  \ldots\, 0\, 0}_{2k\text{ zeros}}} = (q^{3k}+1) \prod_{i=1}^k \pp(q^{2i-1}-1) = q^{k^2+O(k)},
	\end{equation*}
	as $\pp \leq q$. This shows that $n_f = q^{k^2+O(k)}$.
	\item Let $n = n_{f_1} = n_{f_2}$. Suppose that $f_1\in \F_{k_1}$ and $f_2\in \F_{k_2}$. By \cref{item:sizenf}, we have
	\begin{equation*}
		q^{k_1^2 + O(k_1)} = n = q^{k_2^2 + O(k_2)},
	\end{equation*}
	from which we infer that ${k_1 = k_2+O(1)}$. Let $k = \min(k_1, k_2)$. Since $n_{f_1} = n_{f_2} = n$, we also have
	\begin{equation*}
		\digits{r_k(f_1)\, e_k(f_1)\, \ldots\, r_1(f_1)\, e_1(f_1)}  =  \fmod{n}{b_1b_2\cdots b_{2k}} = \digits{r_k(f_2)\, e_k(f_2)\, \ldots\, r_1(f_2)\, e_1(f_2)}.
	\end{equation*}
	The left and right-hand side are two base-$\base$ representations of the same natural number, and since the $i$th digit is between $0$ and $b_i-1$ in both cases, we see that ${e_i(f_1) = e_i(f_2)}$ (and ${r_i(f_1) = r_i(f_2)}$) for all ${1\leq i\leq k}$. By definition of $e_i$, this implies that ${f_1 \equiv f_2\pmod {g_i}}$. By the Chinese remainder theorem, we deduce that 
	\begin{equation}
	\label{eq:congruence1}
		{f_1 \equiv f_2\pmod {g_1\cdots g_k}}.
	\end{equation}
	However, $\deg(f_1-f_2) \leq \cst k^2 + O(k)$ by definition of $\F_k$, whereas $\deg(g_1\cdots g_k) = k^2$. Therefore, $\deg(f_1-f_2) < \deg(g_1\cdots g_k)$ if $\Cst$ is sufficiently large, and \cref{eq:congruence1} can only hold if $f_1 = f_2$. \qedhere
\end{enumerate}
\end{proof}

\section{Sidon property}

\begin{lemma}
\label{lem:sidon}
Let $f_1, f_2, f_3, f_4\in \F$ be such that $n_{f_1} + n_{f_2} = n_{f_3} + n_{f_4}$. Then $\{f_1, f_2\} = \{f_3, f_4\}$.
\end{lemma}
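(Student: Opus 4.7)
The strategy mirrors Cilleruelo's Sidon argument via discrete logarithms, read through the generalised base $\base$. Write $f_j\in\F_{k_j}$. First I would show that $k_1=k_2=k_3=k_4=:k$: the size bound $n_{f_j}=q^{k_j^2+O(k_j)}$ from \cref{lem:preliminary} applied to both sides of $n_{f_1}+n_{f_2}=n_{f_3}+n_{f_4}$ gives $\max(k_1,k_2)=\max(k_3,k_4)+O(1)$, and a closer look at the top positions of the two sums (which are dominated by $s(\cdot)$ multiplied by $b_1\cdots b_{2k_j}$) should force the four $k_j$ to coincide.

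Once all $k_j$ equal a common value $k$, the central observation is that the construction of $A$ is tailored to control the carries in the base-$\base$ expansion of $n_{f_1}+n_{f_2}$. Since $r_i(f)\in A\subset\{1,\ldots,\lfloor \pp/2\rfloor-1\}$, at an even position $\ell=2i$ the digit sum $r_i(f_1)+r_i(f_2)$ is at most $\pp-4$, so even after absorbing an incoming carry of $1$ it stays below $b_{2i}=\pp$. Hence no carry exits an even position, and the incoming carry at every odd position vanishes. At an odd position $\ell=2i-1$ we have $e_i(f_j)\le q^{2i-1}-2$, so $e_i(f_1)+e_i(f_2)\le 2(q^{2i-1}-2)<2b_{2i-1}$, and the carry exiting an odd position lies in $\{0,1\}$. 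The same analysis applies to $n_{f_3}+n_{f_4}$. Matching the two expansions position by position at each odd position $\ell=2i-1$ for $i\le k$ and reading the equation modulo $b_{2i-1}=q^{2i-1}-1$ yields
\[
e_i(f_1)+e_i(f_2)\equiv e_i(f_3)+e_i(f_4)\pmod{q^{2i-1}-1},\qquad 1\le i\le k,
\]
since any discrepancy between the two sides is a $\{0,1\}$-valued carry mismatch multiplied by $q^{2i-1}-1$.

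Raising $\omega_i$ to the exponents on both sides and using $\omega_i^{e_i(f)}\equiv f\pmod{g_i}$ converts this to $f_1 f_2\equiv f_3 f_4\pmod{g_i}$ for every $1\le i\le k$. The polynomials $g_1,\ldots,g_k$ are pairwise coprime irreducibles, so the Chinese remainder theorem gives $f_1 f_2\equiv f_3 f_4\pmod{g_1 g_2\cdots g_k}$. The modulus has degree $\sum_{i=1}^{k}(2i-1)=k^2$, while $\deg f_j\le\cst(k+1)^2$ by definition of $\F_k$, whence $\deg(f_1 f_2-f_3 f_4)\le 2\cst(k+1)^2<k^2$ for $\Cst$ taken large enough (using $\cst<1/2$). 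The congruence therefore upgrades to the equality $f_1 f_2=f_3 f_4$ in $\poly$, and since $\poly$ is a unique factorisation domain with the $f_j$ monic irreducibles, this forces $\{f_1,f_2\}=\{f_3,f_4\}$.

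The step I expect to be the main obstacle is the initial reduction to $k_1=k_2=k_3=k_4$. The logarithmic size bound immediately equates the dominant orders, but does not a priori preclude configurations where one side pairs a large $k_j$ with a much smaller one. In such cases the clean carry bookkeeping breaks down above position $2\min(k_j)$, where the unstructured digits coming from the top term $s(\cdot)$ interfere; ruling this out presumably requires a finer examination of the leading $s$-digits (using \cref{lem:preliminary} to track the image of $f\mapsto n_f$ on the top block).
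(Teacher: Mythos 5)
Your treatment of the balanced case is fine: when all four polynomials lie in the same $\F_k$, the carry analysis (no carry leaves an even position because $r_i(f)\le \pp/2-1$, carries out of odd positions are in $\{0,1\}$) does give $e_i(f_1)+e_i(f_2)\equiv e_i(f_3)+e_i(f_4)\pmod{q^{2i-1}-1}$ for all $i\le k$, hence $f_1f_2\equiv f_3f_4\pmod{g_1\cdots g_k}$, and degree counting with $2\cst<1$ finishes. This part agrees with the paper's computation. The problem is the reduction you defer to: it is not a technical preliminary but the heart of the lemma, and the target you set yourself is false. You cannot force $k_1=k_2=k_3=k_4$; for instance $f_3=f_1\in\F_{k_1}$, $f_4=f_2\in\F_{k_2}$ with $k_2$ far smaller than $k_1$ satisfies the hypothesis, so the most one can prove is (after ordering $k_1\ge k_2$, $k_3\ge k_4$) that $k_1=k_3+O(1)$ and $k_2=k_4+O(1)$, and one must then genuinely argue in the unbalanced regime $k_2\ll k_1$, where, as you note, the digits of $n_{f_1}$ above position $2k_2$ are added to the unstructured block $s(f_2)$.

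The paper closes exactly this gap, using two ingredients absent from your proposal. First, to see that $k_2$ and $k_4$ agree up to $O(1)$, it reads the even-position digits $y_i$ of $n=n_{f_1}+n_{f_2}$: for $i\le k_2$ one has $y_i=r_i(f_1)+r_i(f_2)+\ind{\cdot}\in A+A+\{0,1\}$, while for $k_2+3\le i\le k_1$ one has $y_i=r_i(f_1)\in A$; since $A$ and $A+A+\{0,1\}$ are disjoint by \cref{lem:specialset}~(i), the index where the digit pattern changes is intrinsic to $n$ and pins down $k_2$ (and likewise $k_4$) up to $O(1)$. Your argument only uses $A\subset\{1,\ldots,\lfloor\pp/2\rfloor-1\}$ to control carries and never invokes this disjointness, which is the very reason the set $A$ is engineered this way. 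Second, in the unbalanced case one extracts two congruences, $f_1f_2\equiv f_3f_4$ modulo $\prod_{i\le\min(k_2,k_4)}g_i$ and $f_1\equiv f_3$ modulo $\prod_{\max(k_2,k_4)+3\le i\le\min(k_1,k_3)}g_i$; if neither is an equality of polynomials, comparing degrees yields $(1-\cst)k_1^2-O(k_1)\le k_2^2\le\tfrac{\cst}{1-\cst}k_1^2+O(k_1)$, which is contradictory precisely because $\cst<\tfrac{3-\sqrt5}{2}$. That you only ever need $\cst<1/2$ is a telltale sign: the stronger numerical constraint built into the construction is there exactly for the case your proof does not cover, so the gap is essential rather than cosmetic.
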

\begin{proof}
Without loss of generality, suppose that $f_i\in \F_{k_i}$ for $i\in \{1,2,3,4\}$, where $k_1\geq k_2$ and ${k_3\geq k_4}$. By part \cref{item:sizenf} of \cref{lem:preliminary}, we have $q^{k_1^2+O(k_1)} = n_{f_1} + n_{f_2} = n_{f_3} + n_{f_4} = q^{k_3^2 + O(k_3)}$, and thus $k_1 = k_3 + O(1)$. 

We claim that $k_2 = k_4 + O(1)$. Write the integer $n := n_{f_1} + n_{f_2} = n_{f_3} + n_{f_4}$ in base $\base$, as
\begin{equation*}
	n = \digits{y_l\, x_l\, \cdots \, y_2\, x_2\, y_1\, x_1},
\end{equation*}
for some $0\leq x_i < q^{2i-1}-1$ and $0\leq y_1 < \pp$, where $x_l$ and $y_l$ are not both zero. Let us align the base-$\base$ digits of $n_{f_1}$, $n_{f_2}$ and $n$:
\begin{equation*}
	\begin{matrix}
		n_{f_1}: & s(f_1) & r_{k_1}(f_1) & e_{k_1}(f_1) & \cdots & r_{k_2+1}(f_1) & e_{k_2+1}(f_1) & r_{k_2}(f_1) & e_{k_2}(f_1) & \cdots & r_{1}(f_1) & e_{1}(f_1)\\
		n_{f_2}: & & & & & & s(f_2) & r_{k_2}(f_2) & e_{k_2}(f_2) & \cdots & r_{1}(f_2) & e_{1}(f_2)\\
		\hline
		n:& \cdots& y_{k_1} & x_{k_1} & \cdots &y_{k_2+1} & x_{k_2+1} & y_{k_2} & x_{k_2} & \cdots & y_1 & x_1.
	\end{matrix}	
\end{equation*}
Since $n_{f_1} + n_{f_2} = n$ and $r_i(f_j) \leq \pp/2-1$ for all $i$ and $j$, we can easily express the digits $x_i$ and $y_i$ in terms of the digits of $n_{f_1}$ and $n_{f_2}$. The computation gets slightly more complicated when $i$ is close to $k_2$ (or larger than $k_1$) because $s(f_2)$ can be as large as $q^{3k_2}$, which exceeds $b_{2k_2+1} = q^{2k_2+1}-1$.

On the one hand, for $1\leq i\leq k_2$, we have
\begin{itemize}
	\item $x_i = \fmod{e_i(f_1) + e_i(f_2)}{q^{2i-1}-1}$, and
	\item $y_i = r_i(f_1) + r_i(f_2) + \ind{e_i(f_1) + e_i(f_2) \geq q^{2i-1}-1}$.
\end{itemize}
On the other hand, for $k_2+3\leq i\leq k_1$ we have\footnote{Note that $s(f_2) \leq q^{3k_2} < b_{2k_2+1}b_{2k_2+2}b_{2k_2+3}$, which is why these formulas are valid for $k_2+3 \leq i\leq k_1$.}
\begin{itemize}
	\item $x_i = e_i(f_1)$, and
	\item $y_i = r_i(f_1)$.
\end{itemize}
In particular, for $1\leq i\leq k_2$ we have $y_i \in A+A+\{0, 1\}$, while for $k_2+3 \leq i\leq k_1$ we have $y_i\in A$. Let $i_0$ be the largest integer $\leq l$ such that $y_1, \ldots, y_{i_0} \in A+A+\{0, 1\}$. If $k_2+3>k_1$ then $i_0 = k_1+O(1) = k_2 + O(1)$. Otherwise, since $A$ and $A+A+\{0, 1\}$ are disjoint by \cref{lem:specialset}~\cref{item:disjoint}, we have $k_2 \leq i_0 < k_2+3$. Hence, $i_0 = k_2 + O(1)$ in both cases.

Repeating the whole argument with $f_3$ and $f_4$ in place of $f_1$ and $f_2$, we obtain that $i_0 = k_4 + O(1)$, whence $k_2 = k_4 + O(1)$ as claimed.

Our previous computations have shown that
\begin{equation*}
	x_i = \fmod{e_i(f_1) + e_i(f_2)}{q^{2i-1}-1} = \fmod{e_i(f_3) + e_i(f_4)}{q^{2i-1}-1}
\end{equation*}
for $1\leq i \leq \min(k_2, k_4)$, and 
\begin{equation*}
	x_i = e_i(f_1) = e_i(f_3)
\end{equation*}
for $\max(k_2, k_4)+3 \leq i\leq \min(k_1, k_3)$.

By definition of $e_i(f)$ and the Chinese remainder theorem, this implies that 
\begin{equation}
\label{eq:congruence2}
	f_1f_2 \equiv f_3f_4 \ \  \Big( \mathrm{mod}{\prod_{i=1}^{\min(k_2, k_4)} g_i}\Big)
\end{equation}
and
\begin{equation}
\label{eq:congruence3}
	f_1 \equiv f_3 \ \  \Big( \mathrm{mod}{\prod_{i = \max(k_2, k_4)+3}^{\min(k_1, k_3)} g_i}\Big).
\end{equation}

Suppose first that $f_1 = f_3$. Then $n_{f_2} = n - n_{f_1} = n - n_{f_3} = n_{f_4}$ and hence $f_2 = f_4$ by \cref{lem:preliminary}~\cref{item:uniqueness}, which is what we needed to prove. 

If $f_1f_2 = f_3f_4$, we immediately conclude that $\{f_1, f_2\} = \{f_3, f_4\}$, since all $f_i$ are irreducible monic polynomials.

Hence, we may suppose that $f_1 \neq f_3$ and $f_1f_2 \neq f_3f_4$. By \cref{eq:congruence3} we must have
\begin{equation*}
	\cst k_1^2+O(k_1) \geq \deg(f_1-f_3) \geq \deg \bigg( \prod_{i = k_2+O(1)}^{k_1-O(1)} g_i \bigg) = k_1^2-k_2^2 \optionalsign O(k_1).
\end{equation*}
Similarly, by \cref{eq:congruence2}, we have
\begin{equation*}
	 \cst k_1^2 + \cst k_2^2 + O(k_1) \geq \deg(f_1f_2 - f_3f_4) \geq \deg \bigg( \prod_{i=1}^{k_2 - O(1)} g_i \bigg) = k_2^2 - O(k_2).
\end{equation*}
Therefore, we have
\begin{equation*}
\left\{\begin{array}{rcl}
        k_1^2 - k_2^2 &\leq& \cst k_1^2 + O(k_1)\\
	k_2^2 &\leq& \cst k_1^2 + \cst k_2^2 + O(k_1).
\end{array}\right.
\end{equation*}
Hence, 
\begin{equation*}
	(1-\cst) k_1^2 - O(k_1) \leq k_2^2 \leq \frac{\cst}{1-\cst} k_1^2 + O(k_1),
\end{equation*}
which is impossible since $1-\cst > \frac{\cst}{1-\cst}$, if $\Cst$ is sufficiently large.
\end{proof}

\section{Asymptotic basis of order 3}

\begin{lemma}
\label{lem:expression}
Let $m \geq 3$ be an integer. We can write $m = \digits{z\, y_k\, x_k\, \cdots \, y_2\, x_2\, y_1\, x_1}$, for some $k\geq 0$ and some integers $x_i, y_i, z$ satisfying
\begin{itemize}
	\item $0\leq x_i < q^{2i-1}-1$ for all $1\leq i\leq k$,
	\item $0\leq y_i < 2\pp$ for all $1\leq i\leq k$,
	\item $\{y_i-2, y_i-1, y_i\} \subset A+A+A$ for all $1\leq i\leq k$,
	\item $3\leq z \leq 6\pp q^{2k+1}$.
\end{itemize} 
\end{lemma}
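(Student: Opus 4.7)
My plan is to build the digits greedily from the bottom up, exploiting \cref{lem:specialset}~\cref{item:covers} so that each even-position digit $y_i$ can be chosen freely modulo $\pp$ while still satisfying $\{y_i-2,y_i-1,y_i\}\subset A+A+A$. Concretely, I would first fix once and for all a block of $\pp+2$ consecutive integers $\{N, N+1,\ldots, N+\pp+1\}\subset A+A+A$ and set $Y:=\{N+2,\ldots,N+\pp+1\}$. Then $Y$ is a complete set of residues modulo $\pp$; every $y\in Y$ satisfies $\{y-2,y-1,y\}\subset A+A+A$ by construction; and $y<2\pp$ because $A\subset\{1,\ldots,\lfloor \pp/2\rfloor-1\}$ forces $A+A+A\subset\{3,\ldots,3\pp/2-3\}$.

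Starting from $m_0:=m$, at step $i\geq 1$ I would set $x_i := \fmod{m_{i-1}}{b_{2i-1}}\in [0,q^{2i-1}-1)$, let $m'_{i-1}:=(m_{i-1}-x_i)/b_{2i-1}$, pick the unique $y_i\in Y$ with $y_i\equiv m'_{i-1}\pmod{\pp}$, and set $m_i:=(m'_{i-1}-y_i)/\pp\in\Z$. This enforces the identity $m_{i-1}=x_i+y_ib_{2i-1}+m_ib_{2i-1}b_{2i}$, so iterating yields $m=\digits{m_k\, y_k\, x_k\,\cdots\, y_1\, x_1}$ for any $k\geq 0$.

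The remaining task is to choose $k$ so that $z:=m_k$ lands in $[3,\,6\pp q^{2k+1}]$. For the edge case $m\in\{3,4,5\}$ I take $k=0$ and $z=m$ (the upper bound $z\leq 6\pp q$ is trivial). For $m\geq 6$, I take $k$ to be the largest integer with $m\geq 6\, b_1\cdots b_{2k}$. Maximality together with the identity $b_{2k+1}b_{2k+2}=\pp(q^{2k+1}-1)$ gives $z\leq m/(b_1\cdots b_{2k})<6\pp(q^{2k+1}-1)<6\pp q^{2k+1}$. For the lower bound, the ``low-digit mass'' $S:=m-z\cdot b_1\cdots b_{2k}$ obeys $x_i+y_ib_{2i-1}<2\pp\, b_{2i-1}=2b_{2i-1}b_{2i}$ term-by-term, so $S<2\sum_{i=1}^k b_1\cdots b_{2i}<3\,b_1\cdots b_{2k}$ once $\Cst$ is large enough for the geometric series to be dominated by its last term. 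Hence $z>6-3=3$.

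The construction itself is essentially forced; the only point requiring real care is the calibration of the threshold $6\,b_1\cdots b_{2k}$, which must be large enough to absorb the mass $S\approx 2\,b_1\cdots b_{2k}$ from the lower digits (ensuring $z\geq 3$) yet small enough that the strict failure of the condition at $k+1$ keeps $z\leq 6\pp q^{2k+1}$. The matching ratio between the two endpoints of the window is precisely the bound $2\pp$ on the possible values of $y_i$, and the fact that the consecutive windows $[6\,b_1\cdots b_{2k},\,6\,b_1\cdots b_{2k+2})$ tile $[6,\infty)$ with no overlap or gap guarantees that a suitable $k$ exists for every $m\geq 6$.
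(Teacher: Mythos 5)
Your proposal is correct and follows essentially the same route as the paper: a greedy base-$\base$ digit extraction in which each even-position digit $y_i$ is chosen, using \cref{lem:specialset}~\cref{item:covers}, from a complete residue system modulo $\pp$ sitting two above a run of $\pp+2$ consecutive elements of $A+A+A$ (so that $\{y_i-2,y_i-1,y_i\}\subset A+A+A$ and $y_i<2\pp$). The only difference is bookkeeping: the paper chooses $k$ dynamically by stopping as soon as the current quotient is at most $6\pp q^{2l-1}$, which yields $3\leq z\leq 6\pp q^{2k+1}$ via a maintained invariant, whereas you fix $k$ in advance as the largest index with $m\geq 6\,b_1\cdots b_{2k}$ and confirm the same bounds on $z$ by your mass estimate $S<3\,b_1\cdots b_{2k}$ — both verifications are sound.
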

\begin{proof}
We proceed in a similar way to the standard algorithm that generates the base-$b$ expansion of an integer. Let us inductively define a sequence $(m_i)$ of integers $\geq 3$. Let $m_1 := m$. Suppose we have defined $m_1, \ldots, m_{2l-1}$ for some $l\geq 1$. 

If $m_{2l-1} > 6\pp q^{2l-1}$, we define 
\begin{equation*}
	x_l := \fmod{m_{2l-1}}{q^{2l-1}-1}
\end{equation*}
and we set $m_{2l} := ({m_{2l-1} - x_l})/({q^{2l-1}-1})$.

Let $0\leq y_l < 2\pp$ be an integer with ${y_l \equiv m_{2l} \pmod \pp}$ and such that ${\{y_l-2, y_l-1, y_l\} \subset A+A+A}$. Such an integer exists by \cref{lem:specialset}~\cref{item:covers} (note that $A+A+A \subset [0, 3p/2]$). We define $m_{2l+1} := (m_{2l} - y_l)/{\pp}$. Since $m_{2l-1} > 6\pp q^{2l-1}$, we have
\begin{equation*}
	m_{2l+1} \geq \frac{m_{2l}}{\pp} - 2 \geq \frac{m_{2l-1}}{\pp q^{2l-1}} - 3 \geq 3.
\end{equation*}
We have thus constructed $x_l$, $y_l$, $m_{2l}$ and $m_{2l+1}$.

Otherwise, if $m_{2l-1} \leq 6\pp q^{2l-1}$, so we can define $z := m_{2l-1}$ and stop the construction (thus $k=l-1$). 

The procedure must stop since $(m_i)$ is decreasing. When the procedure terminates, we end up with integers $x_1, \ldots, x_k$, $y_1, \ldots, y_k$ and $z$ that satisfy the required properties, by construction.
\end{proof}

\begin{lemma}
\label{lem:sawin}
Let $d\geq 1$ and let $g\in \poly$ be a squarefree monic polynomial of degree 
\begin{equation*}
	2\leq \deg(g) \leq 3\theta d
\end{equation*}
for some $0 < \theta < 1$. Let $a \in (\poly/(g))^{\times}$. Then, with $\phi(g) := \abs{\left(\poly/(g)\right)^{\times}}$, we have
\begin{equation*}
	\Bigg| \sum_{\substack{f_1, f_2, f_3 \in \irred{d}\\ f_i~\mathrm{distinct}\\ f_1f_2f_3 \equiv a \ (\mathrm{mod}(g))}} \hspace{-0.3cm} 1 \ -\  \frac{1}{\phi(g)} { \abs{\irred{d}}\choose 3} \Bigg| \ll e^{O_{\theta}(d)} q^{\frac{3d - \deg(g)}{2}}.
\end{equation*}
\end{lemma}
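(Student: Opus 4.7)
The plan is to recognise the left-hand side as a sum of a ``factorization function'' over a residue class, and to invoke Sawin's theorem \cite{sawin} on the triple convolution $\Lambda * \Lambda * \Lambda$. Explicitly, setting $F(h) := \mathbf{1}_{h \text{ factors as a product of three distinct irreducibles of degree } d}$, the function $F$ depends only on the factorization type of $h$, and the LHS of the lemma equals $\sum_{\deg h = 3d,\, h \equiv a\,(\mathrm{mod}\,g)} F(h)$.

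The first step is to use orthogonality of Dirichlet characters modulo $g$. Writing $\mathbf{1}_{f_1 f_2 f_3 \equiv a\,(\mathrm{mod}\,g)} = \phi(g)^{-1} \sum_\chi \overline{\chi(a)} \chi(f_1)\chi(f_2)\chi(f_3)$ (valid when $(f_i, g)=1$) and applying inclusion--exclusion to handle distinctness, one obtains
\begin{equation*}
\text{LHS} = \frac{1}{6\phi(g)} \sum_{\chi \bmod g} \overline{\chi(a)} \bigl(S_1(\chi)^3 - 3 S_1(\chi) S_2(\chi) + 2 S_3(\chi)\bigr) + O_\theta(|\irred{d}|^2/\phi(g)),
\end{equation*}
where $S_k(\chi) := \sum_{f \in \irred{d},\,(f,g)=1} \chi(f)^k$ and the $O_\theta$ accounts for the at most $\lfloor 3\theta\rfloor \leq 2$ irreducibles of degree $d$ dividing $g$. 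This correction is nonzero only when $\deg g \geq d$, in which range $|\irred{d}|^2/\phi(g) \leq q^{2d-\deg g}/d^2 \leq q^{(3d-\deg g)/2}$, safely within the error budget. The principal character contributes $\binom{|\irred{d}^*|}{3}/\phi(g) = \binom{|\irred{d}|}{3}/\phi(g) + O_\theta(|\irred{d}|^2/\phi(g))$, where $\irred{d}^* := \{f \in \irred{d} : (f,g)=1\}$; this produces the stated main term.

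The core task is then to control the non-principal contribution, dominated by $\phi(g)^{-1}\bigl|\sum_{\chi \neq \chi_0} \overline{\chi(a)} S_1(\chi)^3\bigr|$. Using $d\, S_1(\chi) = \psi_d(\chi) + O(q^{d/2})$ with $\psi_d(\chi) := \sum_{\deg f = d,\,(f,g)=1} \Lambda(f)\chi(f)$ (the error absorbing the prime-power contributions of degree $d$), and unfolding the cube, this reduces to estimating
\begin{equation*}
\frac{1}{\phi(g)} \sum_{\chi \bmod g} \overline{\chi(a)}\, \psi_d(\chi)^3 = \sum_{\substack{\deg f_i = d,\,(f_i,g)=1\\ f_1 f_2 f_3 \equiv a\,(\mathrm{mod}\,g)}} \Lambda(f_1) \Lambda(f_2) \Lambda(f_3),
\end{equation*}
a Montgomery-type sum for the triple convolution of $\Lambda$ with each factor of degree exactly $d$. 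This is a factorization function in Sawin's sense, and his theorem from \cite{sawin} shows this quantity equals the principal-character main term $\psi_d(\chi_0)^3/\phi(g)$ up to an error $O(e^{O_\theta(d)} q^{(3d - \deg g)/2})$. The two subsidiary sums involving $S_1 S_2$ and $S_3$ correspond, after unfolding, to the easier $P^2Q$- and $P^3$-type counts and are controlled by analogous but simpler applications of Sawin (for $\Lambda * \Lambda$) and of the Weil bound, contributing only lower-order errors.

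The main obstacle will be the final invocation of Sawin's theorem. A naive application of Weil's Riemann hypothesis for $L$-functions over $\poly$ bounds $|S_1(\chi)|$ individually by $(\deg g) q^{d/2}/d$; cubing and summing over non-principal $\chi$ gives $(\deg g)^3 q^{3d/2}/d^3$, a full factor of $q^{\deg g/2}$ too large. The additional cancellation across characters, exactly the Montgomery-conjecture phenomenon extended to $\Lambda * \Lambda * \Lambda$, is what Sawin's deep algebraic-geometric methods furnish, and it is precisely this input that makes the error $e^{O_\theta(d)}q^{(3d-\deg g)/2}$ attainable.
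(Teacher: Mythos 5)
Your overall instinct—reduce the count to a Montgomery-type statement and defer the square-root-in-the-modulus cancellation to Sawin—is the same as the paper's, but the paper's proof is a one-step citation that makes your entire reduction unnecessary: the count of products of three \emph{distinct} irreducibles of degree $d$ is already a factorization function in Sawin's sense, and \cite[Lemma~9.14]{sawin} applied with $\omega=3$, $n_1=n_2=n_3=d$, $h=1$ gives precisely the displayed inequality after noting that $H^1_{d,d,d}$ is $d^3$ times the distinct-triple indicator and that $(3d)!/(d!)^3\ll e^{O(d)}$. No characters, no von Mangoldt function, no inclusion--exclusion are needed.

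More importantly, the detour you take through $\psi_d$ and the subsidiary sums is not merely longer: as written, its error bookkeeping fails. First, writing $S_1(\chi)=\psi_d(\chi)/d+E(\chi)$ with $E(\chi)\ll q^{d/2}$ (proper prime powers) and cubing produces the cross term $\phi(g)^{-1}\sum_{\chi\neq\chi_0}|\psi_d(\chi)|^2|E(\chi)|$; bounding it per character by Weil gives $e^{O(d)}q^{3d/2}$, and unfolding and counting trivially gives $e^{O(d)}\bigl(q^{5d/2-\deg g}+q^{3d/2}\bigr)$ -- both exceed the target $e^{O_\theta(d)}q^{(3d-\deg g)/2}$ by a factor as large as $q^{\deg g/2}$. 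So this term is not ``absorbed''; it is itself a (prime, prime, proper-prime-power) factorization-function count requiring another application of Sawin. Second, the $S_1S_2$ term cannot be handled by ``the Weil bound'' plus easy estimates: it unfolds to $\#\{(f_1,f_2)\in\irred{d}^2:\ f_1f_2^2\equiv a\ (\mathrm{mod}\ g)\}$, for which Weil gives $e^{O(d)}q^d$ (too big once $\deg g>d$), trivial counting gives $\gg q^{d}$ in the same range, and real characters with $\chi^2=\chi_0$ destroy the Weil saving on $S_2$ altogether; a Sawin statement for $\Lambda*\Lambda$ (i.e.\ for $f_1f_2\equiv a$) does not address this $f_1f_2^2$ count. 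Each of these pieces can be repaired, but only by invoking Sawin's general machinery again for the corresponding auxiliary factorization functions (or by case analyses you have not carried out), at which point the honest proof is the paper's: apply Sawin's lemma once, directly, to the quantity in the statement.
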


\begin{proof}
We use \cite[Lemma~9.14]{sawin} with $\omega = 3$, $n_1 = n_2 = n_3 = d$, $n = 3d$ and $h = 1$ (meaning that $c=0$). The author writes $\poly^{+}$ and $\mathcal{M}_n$ for the set of monic polynomials in $\poly$ and the set of monic polynomials of degree $n$ in $\poly$, respectively. The definition of $H^{h}_{n_1, \ldots, n_{\omega}}(f)$ can be found at the bottom of \cite[p.89]{sawin}. In particular,
\begin{equation*}
	H^{1}_{d, d, d}(f) = d^3 \sum_{\substack{f_1, f_2, f_3 \in \irred{d}\\ f_i \text{ distinct}\\ f_1f_2f_3 = f}}  1,
\end{equation*}
and thus 
\begin{equation*}
	\sum_{f\in \mathcal{M}_{3d}}H^{1}_{d, d, d}(f) = d^3 { \abs{\irred{d}}\choose 3}.
\end{equation*}
Since $c = 0$, the conclusion of \cite[Lemma~9.14]{sawin} simplifies to 
\begin{equation*}
	d^3 \Bigg| \sum_{\substack{f_1, f_2, f_3 \in \irred{d}\\ f_i \text{ distinct}\\ f_1f_2f_3 \equiv a \ (\mathrm{mod}(g))}} \hspace{-0.3cm} 1 \ -\  \frac{1}{\phi(g)} { \abs{\irred{d}}\choose 3} \Bigg| \ll \frac{(3d)!}{(d!)^3} (O_{\theta} (q))^{\frac{3d - \deg(g)}{2}}.
\end{equation*}
By Stirling's formula, $\frac{(3d)!}{(d!)^3} \ll e^{O(d)}$, and we obtain \cref{lem:sawin}.
\end{proof}

\begin{lemma}
\label{lem:basis3individual}
Let $m\geq q^{(C_0+2)^2}$. Then $\Pr{m\not\in S+S+S} \leq \exp(-m^{3c-1 \optionalsign o(1)})$ as $m\to +\infty$.\footnote{Recall that $\Prbis$ refers to the probability in the random choice of $r_i(f), s(f)$ (which are the random variables used to define~$S$).}
\end{lemma}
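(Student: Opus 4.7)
The plan is to use Lemma~\ref{lem:expression} to put $m$ into a form matched to our construction, to use Lemma~\ref{lem:sawin} to produce many triples of irreducibles with a prescribed product modulo $g_1 \cdots g_k$, and then to exploit a disjointness property of these triples to run a direct independence argument.

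I start by applying Lemma~\ref{lem:expression} to $m$, obtaining $k \geq 0$ together with integers $x_i, y_i, z$. Since $m \geq q^{(\Cst+2)^2}$, the termination of the algorithm forces $k \geq \Cst$, so $\F_k$ is defined. I then pick an even integer $d \in [\cst k^2, \cst(k+1)^2)$ (so that $\irred{d} \subset \F_k$), set $G := g_1 g_2 \cdots g_k$ (squarefree monic of degree $k^2$), and use the Chinese remainder theorem together with the fact that each $\omega_i$ is a generator of $(\poly/(g_i))^{\times}$ to define the unique element $X \in (\poly/(G))^{\times}$ with $X \equiv \omega_i^{x_i} \pmod{g_i}$ for every $1 \leq i \leq k$. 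By construction, any triple of distinct $f_j \in \irred{d}$ with $f_1 f_2 f_3 \equiv X \pmod{G}$ satisfies $e_i(f_1) + e_i(f_2) + e_i(f_3) \equiv x_i \pmod{q^{2i-1}-1}$ for each $i$.

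Applying Lemma~\ref{lem:sawin} with some $\theta$ slightly above $1/(3\cst) < 1$ (admissible because $\cst > 1/3$), and choosing $\Cst$ large enough for the main term $\phi(G)^{-1}\binom{|\irred{d}|}{3}$ to dominate the error $e^{O_\theta(d)} q^{(3d-k^2)/2}$, I obtain a collection $\mathcal{T}$ of at least $q^{(3\cst-1)k^2 - o(k^2)}$ unordered triples satisfying the above congruence. The crucial observation is that the triples in $\mathcal{T}$ are \emph{pairwise disjoint}: if a single $f_1$ lay in two such triples $\{f_1, f_2, f_3\}$ and $\{f_1, f'_2, f'_3\}$, then $f_2 f_3 \equiv f'_2 f'_3 \pmod{G}$, and since $\deg(f_2 f_3) = 2d < k^2 = \deg G$ (using $\cst < 1/2$ and $k \geq \Cst$), both sides agree as polynomials; unique factorisation in $\poly$ then forces $\{f_2, f_3\} = \{f'_2, f'_3\}$.

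For a fixed $T = \{f_1, f_2, f_3\} \in \mathcal{T}$, I analyse the event $E_T := \{n_{f_1} + n_{f_2} + n_{f_3} = m\}$ via a carry computation based on the generalised base $\base$: one finds
\begin{equation*}
    \sum_{j=1}^{3} n_{f_j} - m = \sum_{i=1}^{k} \gamma_i\, b_1 b_2 \cdots b_{2i-1} + (S - z)\, b_1 b_2 \cdots b_{2k},
\end{equation*}
where $\gamma_i := \alpha_i^T + r_i(f_1) + r_i(f_2) + r_i(f_3) - y_i$ with $\alpha_i^T := (e_i(f_1)+e_i(f_2)+e_i(f_3) - x_i)/(q^{2i-1} - 1) \in \{0, 1, 2\}$, and $S := s(f_1)+s(f_2)+s(f_3)$. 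The bounds $|\gamma_i| \leq 2\pp < \pp(q^{2i+1}-1) = b_{2i}b_{2i+1}$ enable an inductive carry argument showing that the displayed expression vanishes if and only if $\gamma_i = 0$ for every $i$ and $S = z$. Since $\{y_i - 2, y_i - 1, y_i\} \subset A+A+A$ by Lemma~\ref{lem:expression}, each of the $k$ resulting equations on the $r$-variables is satisfied with probability at least $|A|^{-3}$, and the $s$-equation is satisfied with probability at least $\binom{z-1}{2}/q^{9k} \geq q^{-9k}$ (using $z \geq 3$), so $\Pr{E_T} \geq q^{-O(k)}$. Because the $(r_i(f), s(f))_{f \in \F}$ are independent across $f$ and the triples in $\mathcal{T}$ are pairwise disjoint, the events $(E_T)_{T \in \mathcal{T}}$ are mutually independent. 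Setting $\mu := \sum_{T \in \mathcal{T}} \Pr{E_T} \geq |\mathcal{T}| \cdot q^{-O(k)} = q^{(3\cst - 1)k^2 - o(k^2)} = m^{3\cst-1-o(1)}$, one concludes $\Pr{m \notin S+S+S} \leq \prod_{T \in \mathcal{T}} (1 - \Pr{E_T}) \leq \exp(-\mu) \leq \exp(-m^{3\cst - 1 - o(1)})$. The main step requiring care is the carry analysis; the disjointness observation is what makes the independence argument essentially free.
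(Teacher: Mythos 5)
Your proposal is correct and takes essentially the same route as the paper: decompose $m$ via \cref{lem:expression}, use the CRT and \cref{lem:sawin} with an even $d\in[\cst k^2,\cst(k+1)^2)$ to produce $q^{(3\cst-1)k^2-o(k^2)}$ admissible triples, note their pairwise disjointness from the degree bound $2d<k^2=\deg(g_1\cdots g_k)$, lower-bound each success probability by $q^{-O(k)}$ through the carry analysis, and conclude by independence and $1-x\leq e^{-x}$. One tiny remark: only the ``if'' direction of your carry claim is needed (and the ``only if'' direction can in fact fail at the top digit, e.g.\ $\gamma_k=-\pp$ with $S=z+1$), but since you only use the inclusion $\{\gamma_i=0\ \forall i,\ S=z\}\subseteq E_T$ to lower-bound $\Pr{E_T}$, this does not affect the argument.
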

\begin{proof}
Let $m \geq q^{(C_0+2)^2}$. We write $m = \digits{z\, y_k\, x_k\, \cdots \, y_2\, x_2\, y_1\, x_1}$ with $x_i$, $y_i$ and $z$ satisfying the conclusion of \cref{lem:expression}. By a computation similar to that in \cref{lem:preliminary}~\cref{item:sizenf}, we have 
\begin{equation*}
	q^{k^2} \leq m \leq q^{(k+2)^2}.
\end{equation*}
In particular, $k \geq C_0$ and $m = q^{k^2+O(k)}$.

We will see that $m$ is very likely to be expressible as $n_{f_1}+n_{f_2}+n_{f_3}$ for some $f_i\in \F_k$. 

We first focus on the digits $x_i$. By the Chinese remainder theorem, there is some $a\in \poly$ such that 
\begin{equation*}
	a \equiv \omega_i^{x_i} \pmod{g_i}
\end{equation*}
for all $1\leq i\leq k$. Let $g = \prod_{1\leq i\leq k} g_i$; it is a squarefree polynomial of degree $\deg(g) = k^2$. Let $d$ be an even integer with $\cst k^2 \leq d < \cst(k+1)^2$. Let $\E$ be the set of all triples $(f_1, f_2, f_3) \in (\irred{d})^3$ of distinct polynomials such that $f_1f_2f_3 \equiv a \pmod{g}$. By \cref{lem:sawin} with $\theta = k^2/(3d) \leq 1/(3c) < 1$, we have 
\begin{equation}
\label{eq:firstboundE}
	\abs{\abs{\E} - \frac{1}{\phi(g)} { \abs{\irred{d}}\choose 3}} \ll e^{O(d)} q^{\frac{3d-k^2}{2}},
\end{equation}
Using the fact that $\phi(g) = \prod_{i=1}^k (q^{2i-1}-1) = q^{k^2-O(k)}$ and the bound \cref{eq:gauss} on the size of $\irred{d}$, we can simplify \cref{eq:firstboundE} to get
\begin{equation*}
	\abs{\E} = q^{3d-k^2+O(k+\log d)} + O\left(q^{\frac{3d-k^2}{2}+O\big(\frac{d}{\log q}\big)}\right) = q^{(3\cst - 1)k^2+O(k)} + O\left(q^{\big(\frac{3\cst -1}{2}+O\big(\frac{1}{\log q}\big)\big)k^2}\right),
\end{equation*}
using $d = \cst k^2 + O(k)$ for the last step. If $\Cst$ is sufficiently large, the error term is negligible and we obtain
\begin{equation}
\label{eq:thirdboundE}
	\abs{\E} = q^{(3\cst - 1)k^2+O(k)}.
\end{equation}

Let us estimate, for a fixed triple $(f_1, f_2, f_3)\in \E$, the probability that $n_{f_1}+n_{f_2}+n_{f_3} = m$. By definition of the $n_{f_i}$, we can write 
\begin{equation*}
	n_{f_1}+n_{f_2}+n_{f_3} = \digits{s'\, r_k'\, e_k'\, \ldots\, r_2'\, e_2'\, r_1'\, e_1'}
\end{equation*}
where $e_i'$, $r_i'$ and $s'$ are defined by
\begin{itemize}
	\item $e_i' = \fmod{e_i(f_1) + e_i(f_2) + e_i(f_3)}{q^{2i-1}-1}$ for $1\leq i\leq k$,
	\item $r_i' = r_i(f_1) + r_i(f_2) + r_i(f_3) + \kappa_i$ for $1\leq i\leq k$, with $\kappa_i := \lfloor \frac{e_i(f_1) + e_i(f_2) + e_i(f_3)}{q^{2i-1}-1} \rfloor \in \{0, 1, 2\}$,
	\item $s' = s(f_1) + s(f_2) + s(f_3)$.
\end{itemize}
The assumption that $(f_1, f_2, f_3)\in \E$ ensures that $e_i' = x_i$ for all $1\leq i \leq k$. Indeed, both $e_i'$ and $x_i$ are between $0$ and $q^{2i-1}-2$, and 
\begin{equation*}
	\omega_i^{e_i'} \equiv \omega_i^{e_i(f_1) + e_i(f_2) + e_i(f_3)} \equiv f_1f_2f_3 \equiv a \equiv \omega_i^{x_i} \pmod{g_i}.
\end{equation*}
Hence,
\begin{equation*}
	\Pr{n_{f_1}+n_{f_2}+n_{f_3} = m} \geq \Pr{r_1' = y_1,\,r_2' = y_2,\,\ldots,r_k' = y_k,\, s' = z} = \Pr{s' = z} \prod_{1\leq i\leq k}\Pr{r_i' = y_i}.
\end{equation*}
where we used the independence of the family of all random variables $r_i(f_j)$ and $s(f_j)$ in the last step.

Recall that $s'$ is the sum of three independent random numbers chosen uniformly in $\{1, 2, \ldots, q^{3k}\}$, so $s'$ can be equal to any integer in the set $\{3, 4, \ldots, 3q^{3k}\}$, each of which occurs with probability at least $q^{-9k}$. Since $z$ is a fixed element of $\{3, 4, \ldots, 6\pp q^{2k+1}\}$ and $6\pp q^{2k+1} \leq 3q^{3k}$, we get 
\begin{equation*}
	\Pr{s' = z} \geq q^{-9k}.
\end{equation*}
Similarly, $r_i'$ is the sum of three elements of $A$ chosen uniformly and independently at random, plus a fixed carry $\kappa_i \in \{0, 1, 2\}$. Since $\{y_i-2, y_i-1, y_i\} \subset A+A+A$ by \cref{lem:expression}, there is at least one triple $(a_1, a_2, a_3)\in A^3$ such that $a_1+a_2+a_3+\kappa_i = y_i$. Thus, we see that $\Pr{r_i' = y_i} \geq |A|^{-3} \geq \pp^{-3}$. We have thus shown that 
\begin{equation*}
	\Pr{n_{f_1}+n_{f_2}+n_{f_3} = m} \geq q^{-9k}\pp^{-3k} \geq q^{-O(k)}.
\end{equation*}

To conclude the argument, we wish to restrict to a large subset $\E_0$ of $\E$ such that no polynomial appears in more than one triple of $\E_0$. Observe that, if two triples $(f, f_1, f_2)$ and $(f, f_3, f_4)$ are both in $\E$, then $\{f_1, f_2\} = \{f_3, f_4\}$. Indeed, two such triples satisfy $f_1f_2 \equiv f^{-1}a \equiv f_3f_4 \pmod{g}$, which implies that $f_1f_2 = f_3f_4$ in $\poly$ since 
\begin{equation*}
	\deg(f_1f_2 - f_3f_4) \leq 2d < k^2 = \deg(g),
\end{equation*}
and thus $\{f_1, f_2\} = \{f_3, f_4\}$. In addition, if $(f_1, f_2, f_3)\in \E$ then so does $(f_{\sigma(1)}, f_{\sigma(2)}, f_{\sigma(3)})$ for every permutation $\sigma$. Therefore, any given polynomial $f\in \irred{d}$ appears at most once in a triple of $\E$, up to permutations. This shows that there is a suitable subset $\E_0$ with $\abs{\E_0} = \tfrac16 \abs{\E}$.

Therefore, 
\begin{align*}
\Pr{m\not\in S+S+S} &\leq \Prbis \Bigg(\bigcap_{(f_1, f_2, f_3)\in \E_0} \{n_{f_1}+n_{f_2}+n_{f_3} \neq m\} \Bigg)\\
&= \prod_{(f_1, f_2, f_3)\in \E_0} \Pr{n_{f_1}+n_{f_2}+n_{f_3} \neq m} \leq \big(1-q^{-O(k)}\big)^{|\E_0|},
\end{align*}
using the independence of the family of random variables $(n_{f_1}+n_{f_2}+n_{f_3})_{(f_1, f_2, f_3)\in \E_0}$. By \cref{eq:thirdboundE} and the simple inequality $1-x\leq \exp(-x)$, we get
\begin{equation*}
	\Pr{m\not\in S+S+S} \leq \exp \left(-q^{-O(k)} q^{(3\cst - 1)k^2-O(k)} \right) \leq \exp \left(- m^{(3c-1+o(1))} \right),
\end{equation*}
as $m = q^{k^2+O(k)}$.
\end{proof}

\begin{theorem}
\label{thm:basis3}
With probability $1$, the elements of $S$ form a Sidon sequence and an asymptotic basis of order~$3$.
\end{theorem}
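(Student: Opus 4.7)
The plan is to derive the two conclusions separately and then take their intersection, since a finite intersection of probability-$1$ events still has probability $1$.

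For the Sidon property, I would first observe that \cref{lem:sidon} is a \emph{deterministic} statement: its proof used only the integer values of the random digits $r_i(f)$ and $s(f)$, never their distributions. Hence, for every outcome of the random choices and every $f_1, f_2, f_3, f_4\in \F$ satisfying $n_{f_1}+n_{f_2}=n_{f_3}+n_{f_4}$, one has $\{f_1,f_2\}=\{f_3,f_4\}$. Combined with the injectivity of $f\mapsto n_f$ from \cref{lem:preliminary}~\cref{item:uniqueness}, this yields $\{n_{f_1},n_{f_2}\}=\{n_{f_3},n_{f_4}\}$, so $S$ is surely (and in particular almost surely) a Sidon set.

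For the asymptotic basis property, I would apply the first Borel--Cantelli lemma to the events
\begin{equation*}
E_m := \{m \notin S+S+S\}, \qquad m \geq q^{(\Cst+2)^2}.
\end{equation*}
By \cref{lem:basis3individual}, $\Pr{E_m} \leq \exp(-m^{3\cst-1-o(1)})$ as $m\to\infty$. Since $\cst = 0.35$ gives $3\cst-1 = 0.05 > 0$, the quantity $\exp(-m^{3\cst-1-o(1)})$ decays faster than any negative power of $m$, so $\sum_m \Pr{E_m}$ converges. Borel--Cantelli then gives that, almost surely, only finitely many $E_m$ occur; i.e.\ every sufficiently large integer belongs to $S+S+S$. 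Intersecting with the Sidon event (which has probability $1$) concludes the proof.

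The main obstacle has already been overcome in \cref{lem:basis3individual} via Sawin's Montgomery-type estimate (\cref{lem:sawin}); the present theorem is a clean assembly requiring no new arithmetic input. It is worth highlighting that the choice $\tfrac13 < \cst < \tfrac{3-\sqrt5}{2}$ is exactly what makes both halves work simultaneously: the lower bound $3\cst > 1$ is what makes Borel--Cantelli converge here, while the upper bound $1-\cst > \tfrac{\cst}{1-\cst}$ was needed inside \cref{lem:sidon} to rule out the ``$f_1\neq f_3$ and $f_1f_2\neq f_3f_4$'' case.
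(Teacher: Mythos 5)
Your proposal is correct and follows essentially the same route as the paper: the Sidon property comes deterministically from \cref{lem:sidon}, and the basis property follows by applying Borel--Cantelli to the bound of \cref{lem:basis3individual}, exactly as in the paper's proof. The only cosmetic difference is that you spell out the summability of $\exp(-m^{3\cst-1-o(1)})$ and the intersection of the two almost-sure events, which the paper leaves implicit.
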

\begin{proof}
By \cref{lem:sidon}, $S$ always has the Sidon property. By \cref{lem:basis3individual}, 
\begin{equation*}
	\Pr{m\not\in S+S+S} \leq \exp(-m^{3c-1 \optionalsign o(1)}) \ll m^{-2}
\end{equation*}
for sufficiently large $m$, so 
\begin{equation*}
	\Pr{\abs{\N \setminus S+S+S} = +\infty} = 0
\end{equation*}
by the Borel-Cantelli lemma. We conclude that $S$ is almost surely an asymptotic basis of order~$3$.
\end{proof}

\appendix

\section{Auxiliary set}
\label{sec:appendix}

In this section, we use the notation $f*g$ for the additive convolution
\begin{equation*}
	f*g (n) = (f*g) (n) := \sum_{\substack{a, b\in \Z\\ a+b=n}} f(a)g(b).
\end{equation*}

\begin{proof}[Proof of \cref{lem:specialset}]
Let $I = \{1, 2, \ldots, \lfloor \pp/2 \rfloor - 1\} \subset \Z$. 

Let $R$ be the random set obtained by selecting each element of $I$, independently, with probability $Kp^{-2/3}$, where $K = \lceil \log p \rceil$. In particular, $\EE{\abs{R}} \asymp Kp^{1/3}$.

Note that $R$ almost satisfies property \cref{item:disjoint} of \cref{lem:specialset}, in the sense that
\begin{equation*}
	\EEbis \big[\abs{R\cap (R+R+\{0, 1\})} \big] \leq \sum_{\substack{a,b,c \in I\\ a-b-c \in \{0, 1\}}} \Pr{a,b,c\in R}.
\end{equation*}
The probability $\Pr{a,b,c\in R}$ is $(Kp^{-2/3})^k$ where $k = |\{a,b,c\}|$. Therefore,
\begin{equation}
\label{eq:propjustfails}
	\EEbis \big[\abs{R\cap (R+R+\{0, 1\})} \big] \ll p^2 (Kp^{-2/3})^3 + p (Kp^{-2/3})^2 + 1 (Kp^{-2/3})^1 \ll K^3.
\end{equation}

It will be useful to know the bound $\norm{\ind{R} *\ind{R}}_{\infty} \ll 1$, which holds with high probability. Indeed, if $\ind{R}*\ind{R}(n) \geq 9$ for some $n\in \N$, then $R$ contains elements $a_1<a_2<\cdots<a_8$ such that $a_i+a_{9-i} = n$ for all $i$, and thus
\begin{equation}
\label{eq:infinitenorm}
	\Pr{\exists n, \  \ind{R}*\ind{R}(n) \geq 9} \leq \sum_{n=1}^{p} \sum_{\substack{a_1, \ldots, a_8\in I\\ a_i\text{ distinct}\\ a_i+a_{9-i} = n}} \Pr{\forall i,\, a_i\in R} \ll p^5 (Kp^{-2/3})^8 \ll K^8p^{-1/3} \ll K^{-1}.
\end{equation}

Similarly, if $\ind{R}*\ind{-R}(n) \geq 8$ for some $n\in \Z^{\neq 0}$, then $R$ contains distinct elements $a_1,\ldots, a_8$ such that $a_{2i}-a_{2i-1} = {n}$ for $i\in \{1,2,3,4\}$, and as above we get
\begin{equation}
\label{eq:infinitenormdiff}
	\Pr{\exists n\in \Z^{\neq 0}, \  \ind{R}*\ind{-R}(n) \geq 8} \ll K^{-1}.
\end{equation}

We now turn to property \cref{item:covers} of \cref{lem:specialset}. Let $n$ be an integer with $p/5 \leq n \leq 7p/5$. We will show that $n$ can be written in relatively many ways as the sum of three distinct elements of $R$, with high probability. Let $\triples(n)$ be the collection of all sets $\{a, b, c\}$ of three pairwise distinct elements of $I$ such that $a+b+c = n$. We will show that 
\begin{equation}
\label{eq:toprovebyJanson}
	\Prbis\Bigg( \sum_{\substack{T\in \triples(n)}} \ind{T\subset R} \leq K^{2}\Bigg) \ll e^{-K^2}.
\end{equation}
To bound the left-hand side, we use Janson's inequality. This inequality involves the quantities
\begin{equation*}
	\lambda := \EEbis \Bigg[ \sum_{\substack{T\in \triples(n)}} \ind{T\subset R} \Bigg] = \sum_{\substack{T\in \triples(n)}} (Kp^{-2/3})^3  \gg p^2(Kp^{-2/3})^3 = K^3
\end{equation*}
and
\begin{equation*}
	\Delta := \sum_{\substack{T_1, T_2\in \triples(n)\\ T_1\cap T_2\neq \emptyset \\ T_1\neq T_2}} \Pr{T_1\cup T_2\subset R}. 
\end{equation*}
Janson's inequality \cite[Theorem~2.14]{janson} gives
\begin{equation}
\label{eq:jansonused}
	\Prbis\Bigg( \sum_{\substack{T\in \triples(n)}} \ind{T\subset R} \leq \tfrac12 \lambda\Bigg) \leq \exp \left( -\frac{\lambda^2}{8(\lambda+\Delta)}\right).
\end{equation}

Observe that, if $T_1, T_2\in \triples(n)$ are distinct but not disjoint, we must have $\abs{T_1 \cap T_2} = 1$, so there are pairwise distinct elements $a, b_1, b_2, c_1, c_2\in I$ such that $T_1 = \{a, b_1, c_1\}$ and $T_2 = \{a, b_2, c_2\}$. In particular,
\begin{equation*}
	\Pr{T_1\cup T_2\subset R} = (K p^{-2/3})^5,
\end{equation*}
and there are $\ll p^3$ such pairs $(T_1, T_2)$. Hence,
\begin{equation*}
	\Delta \ll p^3 (Kp^{-2/3})^5 \ll 1 \ll \lambda.
\end{equation*}
Provided $p$ is larger than some absolute constant, we have $K^{2} \leq \tfrac12 \lambda$, so \cref{eq:toprovebyJanson} follows from \cref{eq:jansonused}, using our estimates for $\lambda$ and $\Delta$. By the union bound, we get
\begin{equation}
\label{eq:unionbound}
	\Prbis \Bigg( \exists n\in [p/5, 7p/5]\cap \Z,\  \sum_{\substack{T\in \triples(n)}} \ind{T\subset R} \leq K^{2} \Bigg) \ll p e^{-K^2} \ll K^{-1}.
\end{equation}

We have shown that, typically, $R$ satisfies property \cref{item:covers} of \cref{lem:specialset} in a robust sense (by \cref{eq:unionbound}), but $R$ just fails to satisfy \cref{item:disjoint} (see \cref{eq:propjustfails}). 

We define $X := R\cap (R+R+\{0, 1\})$ and $A := R\setminus X$. This set $A$ satisfies property \cref{item:disjoint} of \cref{lem:specialset} by construction. We must show that $A$ still satisfies property \cref{item:covers} of \cref{lem:specialset}, with high probability. 

Let $n\in [p/5, 7p/5]\cap \Z$. We know that, with high probability, $n$ is the sum of three distinct elements of $R$ in many different ways. Thus, having $n\not\in A+A+A$ means that, whenever $n$ is written as $n = a+b+c$ with distinct $a,b,c\in R$, at least one of $a,b,c$ is in $X$.

Define
\begin{equation*}
	\tupples(n) := \Big\{(a,b,c,d,e) \in I^5: a,b,c \text{ pairwise distinct}, \ a+b+c = n, \ c-d-e\in \{0, 1\}\Big\}.
\end{equation*}
Suppose that $n$ is such that
\begin{equation*}
	\abs{\tupples(n) \cap R^5}\geq K.
\end{equation*}
We claim that either $\max_{m\in \N} \ind{R} *\ind{R}(m) \geq 10$, $\max_{m\in \N} \ind{R} *\ind{-R}(m) \geq 10$, or there are four tuples $(a_i, b_i, c_i, d_i, e_i)_{1\leq i \leq 4} \in \tupples(n) \cap R^5$ such that, for all $1\leq i<j\leq 4$,
\begin{equation}
\label{eq:conddisjointcoord}
	\{a_i, b_i, c_i, d_i, e_i\} \cap \{a_j, b_j, c_j, d_j, e_j\} = \emptyset.
\end{equation}

Suppose first that there is some $a\in R$ appearing as the first coordinate of $200$ tuples 
\begin{equation*}
	{(a, b_i, c_i, d_i, e_i)_{1\leq i\leq 200} \in \tupples(n) \cap R^5}.
\end{equation*}
Then $n-a$ can be written as $b_i+c_i$ for all $1\leq i\leq 200$. If $\ind{R} *\ind{R}(n-a) < 10$, there are $b,c\in R$ such that $b_i = b$ and $c_i = c$ for at least $20$ values of $i\in \{1, \ldots, 200\}$, say for $i \in J$ where $|J| \geq 20$. In turn, this implies that $d_i + e_i \in \{c-1, c\}$ for all $i\in J$, but if $\ind{R} *\ind{R}(c-1) < 10$ and $\ind{R} *\ind{R}(c) < 10$ there must exist two distinct indices $i, j\in J$ such that $(d_i, e_i) = (d_j, e_j)$. This is impossible since $(a, b_i, c_i, d_i, e_i)$ and $(a, b_j, c_j, d_j, e_j)$ are distinct tuples.

Similar reasoning shows that no $r\in R$ can appear as the second or third coordinate of $200$ tuples in $\tupples(n) \cap R^5$, unless $\max_{m\in \N} \ind{R} *\ind{R}(m) \geq 10$. 

Suppose that there is some $d\in R$ that appears as the fourth coordinate of $200$ tuples 
\begin{equation*}
	{(a_i, b_i, c_i, d, e_i)_{1\leq i\leq 200} \in \tupples(n) \cap R^5}.
\end{equation*}
Then $c_i-e_i \in \{d, d+1\}$ for all $i$. If $\max_{m\in \N} \ind{R} *\ind{-R}(m) < 10$, there are $c, e\in R$ and at least $10$ values of $i$ such that $c_i = c$ and $e_i = e$. Thus, for these $\geq 10$ values of $i$, we have $a_i + b_i = n-c$. If $ \ind{R} *\ind{R}(n-c) < 10$, there are two distinct indices $i,j$ such that $(a_i, b_i, c_i, d, e_i) = (a_j, b_j, c_j, d, e_j)$, a contradiction.

The same reasoning shows that no $e\in R$ appears in $200$ distinct elements of $\tupples(n) \cap R^5$, unless $\norm{\ind{R} *\ind{R}}_{\infty} \geq 10$ or $\max_{m\in \N} \ind{R} *\ind{-R}(m) \geq 10$.

The above claim follows easily from these observations. Indeed, suppose that $\norm{\ind{R} *\ind{R}}_{\infty} < 10$, $\max_{m\in \N} \ind{R} *\ind{-R}(m) < 10$ and $\abs{\tupples(n) \cap R^5}\geq K$. Let $(a_1, b_1, c_1, d_1, e_1)$ be an arbitrary element of $\tupples(n) \cap R^5$. The previous observations show that the number of tuples in $\tupples(n) \cap R^5$ having a coordinate in $\{a_1, \ldots, e_1\}$ is bounded above by an absolute constant. Since $K$ is assumed to be sufficiently large, we can find another tuple $(a_2, b_2, c_2, d_2, e_2)$ such that $\{a_1, \ldots, e_1\} \cap \{a_2, \ldots, e_2\} = \emptyset$. Repeating, we find four tuples $(a_i, b_i, c_i, d_i, e_i)_{1\leq i\leq 4}\in \tupples(n) \cap R^5$ satisfying \cref{eq:conddisjointcoord}.

By this claim and the union bound, we deduce that
\begin{align*}
	\Prbis \Big(\exists n\in [p/5, 7p/5]\cap \Z,\ &\abs{\tupples(n) \cap R^5}\geq K\Big) \leq \Prbis\Big( \norm{\ind{R} *\ind{R}}_{\infty} \geq 10\Big) + \Prbis \Big(\max_{m\in \N} \ind{R} *\ind{-R}(m) \geq 10\Big) \\&+ \sum_{n\in [p/5, 7p/5]\cap \Z} \ \sum_{\substack{(a_i, b_i, c_i, d_i, e_i)_{1\leq i\leq 4} \in \tupples(n) \\ \text{satisfying \cref{eq:conddisjointcoord}}}} \Prbis \Big(\forall i,\  a_i, b_i, c_i, d_i, e_i\in R \Big).
\end{align*}
By \cref{eq:infinitenorm} and \cref{eq:infinitenormdiff}, the probabilities $\Prbis( \norm{\ind{R} *\ind{R}}_{\infty} \geq 10)$ and $\Prbis (\max_{m\in \N} \ind{R} *\ind{-R}(m) \geq 10)$ are $\ll K^{-1}$. By \cref{eq:conddisjointcoord}, the events $\{a_i, \ldots, e_i\in R\}$ and $\{a_j, \ldots, e_j\in R\}$ are independent for $i\neq j$. Thus, the last probability is 
\begin{equation*}
	\Prbis (\forall i,\  a_i, b_i, c_i, d_i, e_i\in R) = \prod_{i=1}^4 \Pr{a_i, b_i, c_i, d_i, e_i\in R} = \prod_{i=1}^4 (Kp^{-2/3})^{\abs{\{a_i, \ldots, e_i\}}}.
\end{equation*}
For $k\in \{3,4,5\}$, let  
\begin{equation*}
	\tupples_k(n) = \Big\{ (a,b,c,d,e)\in \tupples(n) : \abs{ \{a,b,c,d,e\}} = k \Big\}.
\end{equation*}
It is not hard to see that $\abs{\tupples_k(n)} \ll p^{k-2}$ for $k\in \{3,4,5\}$. Hence, we obtain
\begin{align*}
	\Prbis \Big(\exists n\in [p/5, 7p/5]\cap \Z,\ \abs{\tupples(n) \cap R^5}\geq K\Big) &\ll K^{-1} + \sum_{n\in [p/5, 7p/5]\cap \Z}\, \prod_{i=1}^4  \sum_{k =3}^5 \abs{\tupples_{k}(n)} (Kp^{-2/3})^{k}\\
	&\ll K^{-1} + p \bigg( \sum_{k=3}^5 p^{k-2} (Kp^{-2/3})^k \bigg)^4 \\
	&\ll K^{-1} + p (K^{5} p^{-1/3})^4 \\ &\ll K^{-1}. 
\end{align*}

The last computation and \cref{eq:unionbound} imply that there is a set $R\subset I$ such that 
\begin{enumerate}[label=(\alph*), ref=\alph*]
	\item for all $n\in [p/5, 7p/5]\cap \Z$, there are $>K^2$ sets $\{a,b,c\}$ of three distinct elements of $R$ such that $n = a+b+c$;
	\item for all $n\in [p/5, 7p/5]\cap \Z$, there are $\ll K$ sets $\{a,b,c\}$ of three distinct elements of $R$ such that $n = a+b+c$, with one of $a,b,c$ in $R+R+\{0, 1\}$,
\end{enumerate}
since with high probability, both properties hold simultaneously. 

As announced earlier, we define $X = R\cap (R+R+\{0, 1\}$ and $A = R\setminus X$. Then $A$ and $A+A+\{0, 1\}$ are disjoint by construction, and for every $n\in [p/5, 7p/5]\cap \Z$ there are $K^2 - O(K) \geq 1$ ways to write $n$ as the sum of three elements of $R$, none of which being in $X$. This means that we can write $n$ as the sum of three elements of $A$, and we are done.
\end{proof}

\bibliography{proof}

\providecommand{\bysame}{\leavevmode\hbox to3em{\hrulefill}\thinspace}
\providecommand{\MR}{\relax\ifhmode\unskip\space\fi MR }
\providecommand{\MRhref}[2]{%
  \href{http://www.ams.org/mathscinet-getitem?mr=#1}{#2}
}
\providecommand{\href}[2]{#2}
\begin{thebibliography}{10}

\bibitem{szemeredi}
Mikl{\'o}s Ajtai, J{\'a}nos Koml{\'o}s, and Endre Szemer{\'e}di, \emph{A dense
  infinite {S}idon sequence}, European Journal of Combinatorics \textbf{2}
  (1981), no.~1, 1--11.

\bibitem{cilleruelo}
Javier Cilleruelo, \emph{Infinite {S}idon sequences}, Advances in Mathematics
  \textbf{255} (2014), 474--486.

\bibitem{sidon3plus}
\bysame, \emph{On {S}idon sets and asymptotic bases}, Proceedings of the London
  Mathematical Society \textbf{111} (2015), no.~5, 1206--1230.

\bibitem{sidon7}
Jean-Marc Deshouillers and Alain Plagne, \emph{A {S}idon basis}, Acta
  Mathematica Hungarica \textbf{123} (2009), no.~3.

\bibitem{erdosstory}
P{\'a}l Erd{\H{o}}s, \emph{Solved and unsolved problems in combinatorics and
  combinatorial number theory}, European Journal of Combinatorics \textbf{2}
  (1981), 1--11.

\bibitem{erdos3}
\bysame, \emph{The probability method: Successes and limitations}, Journal of
  statistical planning and inference \textbf{72} (1998), no.~1-2, 207--213.

\bibitem{erdos1}
P{\'a}l Erd{\H{o}}s, Andr{\'a}s S{\'a}rk{\"o}zy, and Vera~T S{\'o}s, \emph{On
  additive properties of general sequences}, Discrete Mathematics \textbf{136}
  (1994), no.~1-3, 75--99.

\bibitem{erdos2}
\bysame, \emph{On sum sets of {S}idon sets, {I}}, Journal of Number Theory
  \textbf{47} (1994), no.~3, 329--347.

\bibitem{erdos2bis}
\bysame, \emph{On sum sets of {S}idon sets, {II}}, Israel Journal of
  Mathematics \textbf{90} (1995), 221--233.

\bibitem{goguel}
Johann~H Goguel, \emph{{\"U}ber {S}ummen von zuf{\"a}lligen {F}olgen
  nat{\"u}rlicher {Z}ahlen.}, Journal f{\"u}r die reine und angewandte
  Mathematik \textbf{272} (1975), 63--77.

\bibitem{rothhalberstam}
Heini Halberstam and Klaus~F Roth, \emph{Sequences}, Springer New York, 1983.

\bibitem{iwanieckowalski}
Henryk Iwaniec and Emmanuel Kowalski, \emph{Analytic number theory}, vol.~53,
  American Mathematical Soc., 2004.

\bibitem{janson}
Svante Janson, Andrzej Rucinski, and Tomasz Luczak, \emph{Random graphs}, John
  Wiley \& Sons, 2011.

\bibitem{sidon5}
S{\'a}ndor~Z Kiss, \emph{On {S}idon sets which are asymptotic bases}, Acta
  Mathematica Hungarica \textbf{128} (2010), no.~1-2, 46--58.

\bibitem{sidon4}
S{\'a}ndor~Z Kiss, Eszter Rozgonyi, and Csaba S{\'a}ndor, \emph{On {S}idon sets
  which are asymptotic bases of order 4}, Functiones et Approximatio
  Commentarii Mathematici \textbf{51} (2014), no.~2, 393--413.

\bibitem{kiss2hplus1}
S{\'a}ndor~Z Kiss and Csaba S{\'a}ndor, \emph{Generalized asymptotic {S}idon
  basis}, Discrete Mathematics \textbf{344} (2021), no.~2, 112208.

\bibitem{kiss2h}
\bysame, \emph{On ${B_h[1]}$-sets which are asymptotic bases of order 2h},
  preprint arXiv:2202.13841 (2022).

\bibitem{dense}
\bysame, \emph{Dense sumsets of {S}idon sequences}, European Journal of
  Combinatorics \textbf{107} (2023), 103600.

\bibitem{ruzsa}
Imre~Z Ruzsa, \emph{An infinite {S}idon sequence}, Journal of Number Theory
  \textbf{68} (1998), no.~1, 63--71.

\bibitem{sarkozy}
Andr{\'a}s S{\'a}rk{\"o}zy, \emph{Unsolved problems in number theory},
  Periodica Mathematica Hungarica \textbf{42} (2001), no.~1-2, 17--35.

\bibitem{sawin}
Will Sawin, \emph{Square-root cancellation for sums of factorization functions
  over squarefree progressions in $\mathbb{F}_q[t]$}, Accepted in {A}cta
  {M}athematica, preprint arXiv:2102.09730 (2021).

\end{thebibliography}
\bibliographystyle{amsplain}

\end{document}